\newcommand{\nc}{\newcommand}
\numberwithin{equation}{section}
\newenvironment{red}{\relax\color{red}}{\relax}
\newenvironment{blue}{\relax\color{blue}}{\hspace*{.5ex}\relax}
\newenvironment{jaune}{\relax\color{Orchid}}{\hspace*{.5ex}\relax}
\newcommand{\beb}{\begin{blue}}
	\newcommand{\eb}{\end{blue}}
\newcommand{\bj}{\begin{jaune}}
	\newcommand{\ej}{\end{jaune}}
\newcommand{\ber}{\begin{red}}
	\newcommand{\er}{\end{red}}
\theoremstyle{plain}
\newtheorem{lemma}{Lemma}[section]
\newtheorem{prop}[lemma]{Proposition}
\newtheorem{theorem}[lemma]{Theorem}
\newcommand{\Prop}{\begin{prop}}
	\newcommand{\enprop}{\end{prop}}
\newcommand{\Lemma}{\begin{lemma}}
	\newcommand{\enlemma}{\end{lemma}}
\newcommand{\Th}{\begin{theorem}}
	\newcommand{\enth}{\end{theorem}}
\newtheorem{corollary}[lemma]{Corollary}
\newcommand{\Cor}{\begin{corollary}}
	\newcommand{\encor}{\end{corollary}}
\newtheorem{definition}[lemma]{Definition}
\newtheorem*{conjecture}{Conjecture}
\newcommand{\Def}{\begin{definition}}
	\newcommand{\edf}{\end{definition}}
\newtheorem{sublemma}[lemma]{Sublemma}
\newcommand{\Sublemma}{\begin{sublemma}}
	\newcommand{\ensub}{\end{sublemma}}
\theoremstyle{definition}
\newtheorem{remark}[lemma]{Remark}
\newtheorem{example}[lemma]{Example}
\newtheorem{Convention}[lemma]{Convention}
\newcommand{\Conv}{\begin{Convention}}
	\newcommand{\enconv}{\end{Convention}}
\nc{\Con}{\begin{conjecture}}
	\nc{\encon}{\end{conjecture}}
\nc{\Rem}{\begin{remark}}
	\nc{\enrem}{\end{remark}}
\newcommand{\C}{{\mathbb C}}
\newcommand{\Q}{\mathbb {Q}}
\newcommand{\Z}{{\mathbb Z}}
\newcommand{\B}{{\mathbf{B}}}
\def\inv{^{-1}}
\newcommand{\g}{{\mathfrak{g}}}
\newcommand{\End}{\operatorname{End}}
\newcommand{\isoto}[1][]{\mathop{\xrightarrow%
		[{\raisebox{.3ex}[0ex][.3ex]{$\scriptstyle{#1}$}}]%
		{{\raisebox{-.6ex}[0ex][-.6ex]{$\mspace{2mu}\sim\mspace{2mu}$}}}}}
\newcommand{\eq}{\begin{eqnarray}}
	\newcommand{\eneq}{\end{eqnarray}}
\newcommand{\eqa}{\begin{align}}
	\newcommand{\eneqa}{\end{align}}
\newcommand{\eqn}{\begin{eqnarray*}}
	\newcommand{\eneqn}{\end{eqnarray*}}
\newcommand{\on}{\operatorname}
\newcommand{\QED}{\end{proof}}
\newcommand{\Proof}{\begin{proof}}
\newcommand{\id}{\on{id}}
\newcommand{\ba}{\begin{array}}
	\newcommand{\ea}{\end{array}}
\newcommand{\set}[2]{\left\{#1 \mid #2 \right\}}
\newcommand{\eqsub}{\begin{subequations}\begin{eqnarray}}
		\newcommand{\eneqsub}{\end{eqnarray}\end{subequations}}
\newcommand{\ol}{\overline}
\newdimen\ycell
\def\youngstyle{\textstyle}
\def\yboxit#1{{\setbox0=
		\hbox to\ycell{\hss$\mathsurround0pt\youngstyle#1$\hss}\relax
		\ht0=0.7\ycell\dp0=0.3\ycell\vtop{\vbox{\hrule
				\hbox{\vrule\box0\vrule}}\hrule}\kern-0.4pt}}
\def\yfin{\end}
\def\yline#1\cr{\def\donne{#1}\ifx\donne\yfin\def\nextyou{\kern0.4pt}\else
	\hbox{\let\nexty\spoolyline\spoolyline#1&\end&}\kern-0.4pt\fi\nextyou}
\def\spoolyline#1&{\def\donne{#1}\ifx\donne\yfin\def\nexty{\kern0.4pt}\else
	\yboxit{#1}\fi\nexty}
\def\young#1#2{\vbox{\ycell=#1\offinterlineskip
		\let\nextyou\yline\yline#2\cr\end\cr}}
\nc{\la}{\lambda}
\nc{\lam}{\lambda}
\nc{\U}[1][\g]{U_q(#1)}
\nc{\te}{\tilde{e}}
\nc{\tei}{\tilde{e}_i}
\nc{\tf}{\tilde{f}}
\nc{\tfi}{\tilde{f}_i}
\nc{\tU}{\widetilde U_q(\g)}
\nc{\tE}{\tilde{E}}
\nc{\tF}{\widetilde{\F}}
\nc{\tK}{\widetilde{K}}
\nc{\tk}{\tilde{k}}
\nc{\tkone}{\tk_{\ol{1}}}
\nc{\teone}{\tilde{e}_{\ol{1}}}
\nc{\tfone}{\tilde{f}_{\ol{1}}}
\nc{\teibar}{\tilde{e}_{\ol{i}}} \nc{\tfibar}{\tilde{f}_{\ol{i}}}
\nc{\tki}{{\tk}_{\ol {i}}}
\nc{\BZ}{{\mathbb{Z}}}
\nc{\al}{\alpha}
\nc{\qs}{{q}}
\nc{\lan}{\langle}
\nc{\ran}{\rangle}
\nc{\re}{{\mathrm{re}}}
\nc{\wt}{\operatorname{wt}}
\nc{\ch}{\operatorname{ch}}
\nc{\Um}[1][\g]{U^-_q(#1)}
\nc{\Ue}{U^+_q(\g)}
\nc{\eps}{\varepsilon}
\nc{\vphi}{\varphi}
\nc{\sphi}{\varphi^*}
\nc{\seps}{\varepsilon^*}
\nc{\nn}{\nonumber}
\nc{\vp}{\varpi}
\nc{\cls}{{\operatorname{cl}}}
\nc{\Wt}{{\operatorname{Wt}}}
\nc{\Us}{U'_q(\g)}
\nc{\La}{\Lambda}
\nc{\tLa}{\widetilde\Lambda}
\nc{\ro}{{\rm(}}
\nc{\rf}{{\rm)}}
\nc{\norm}{{\mathrm{norm}}}
\nc{\qbox}{\quad\mbox}
\nc{\braid}{{\mathfrak{B}}}
\nc{\Ad}{\operatorname{Ad}}
\nc{\Aut}{\operatorname{Aut}}
\nc{\dt}[1]{\tilde{\tilde #1}}
\nc{\Sn}{S^{{\mathrm{norm}}}}
\nc{\aff}{{\rm{aff}}}
\nc{\rk}{{\mathrm{rk}}}
\nc{\tP}{\widetilde{P}}
\nc{\tW}{\widetilde{W}}
\nc{\Dyn}{\mathrm{Dyn}}
\nc{\tD}{\widetilde{\Delta}}
\nc{\height}[1]{{\operatorname{ht}}(#1)}
\nc{\bl}{\bigl(}
\nc{\br}{\bigr)}
\nc{\Hecke}{\mathrm{H}}
\nc{\HA}{\Hecke^{\mathrm{A}}}
\nc{\HB}{\Hecke^{\mathrm{B}}}
\newcommand{\scbul}{{\,\raise1pt\hbox{$\scriptscriptstyle\bullet$}\,}}
\nc{\vac}{{\phi}}
\nc{\Bt}{\B_\theta(\g)}
\nc{\be}{\begin{enumerate}}
	\nc{\ee}{\end{enumerate}}
\nc{\low}{{\mathrm{low}}}
\nc{\upper}{{\mathrm{up}}}
\nc{\Zodd}{\Z_{\mathrm{odd}}}
\nc{\Ft}[1][n]{\mathbb{P}\mathrm{ol}_{#1}}
\nc{\Ftf}[1][n]{\widetilde{\mathbb{P}\mathrm{ol}}_{#1}}
\nc{\KA}{\on{K}^{\mathrm{A}}}
\nc{\KB}{\on{K}^{\mathrm{B}}}
\nc{\Res}{\on{Res}}
\nc{\Fc}[1][{n,m}]{\mathbf{F}_{#1}}
\nc{\tphi}{\tilde{\varphi}}
\nc{\CO}{\mathscr{O}}
\nc{\inte}{\mathrm{int}}
\nc{\Oint}{\mathcal{O}^{\ge0}_{\inte}}
\nc{\vs}{\vspace*}
\nc{\tLt}{\widetilde{L}}
\nc{\tL}{\widetilde{\Lambda}}
\nc{\tu}{\tilde{u}}
\nc{\noi}{\noindent}
\nc{\heigh}{\mathfrak{t}}
\nc{\lowest}{\mathfrak{l}}
\nc{\rootl}{\mathsf{Q}}
\nc{\cl}{\colon}
\nc{\uqpg}{U'_q(\mathfrak g)}
\nc{\uq}{\uqpg}
\nc{\Oh}{\widehat{\mathcal{O}}}
\nc{\pn}{p_{\mathfrak{n}}}
\nc{\KLR}{KLR algebra}
\nc{\KLRs}{KLR algebras}
\nc{\cor}{\mathbf{k}}
\nc{\cora}{{\cor(A)}}
\nc{\haut}{\mathrm{ht}}
\nc{\tens}{\mathop\otimes}
\nc{\gmod}{\mbox{-$\mathrm{gmod}$}}
\nc{\gMod}{\mbox{-$\mathrm{gMod}$}}
\nc{\proj}{\mbox{-$\mathrm{proj}$}}
\nc{\gproj}{\mbox{-$\mathrm{gproj}$}}
\nc{\smod}{\mbox{-$\mathrm{mod}$}}
\nc{\Mod}{\mbox{-$\mathrm{Mod}$}}
\nc{\h}{\mathfrak h}
\nc{\Rnorm}{R^{\rm{norm}}}
\nc{\Vhat}{\widehat{V}}
\nc{\F}{\mathcal{F}}
\def\T{{\mathcal T}}
\nc{\fd}[1][A]{\on{\mathrm{flat.dim}_{#1}}}
\nc{\bP}{{\mathbb{P}}}
\nc{\bPh}{\widehat{\mathbb{P}}}
\nc{\bK}[1][{n}]{\widehat{\mathbb{K}}_{#1}}
\nc{\bV}[1][{n}]{\widehat{V}^{\otimes{#1}}}
\nc{\bVK}[1][{n}]{\widehat{V}^{\otimes{#1}}_{\widehat{\mathbb{K}}}}
\nc{\hV}{\widehat{V}}
\nc{\opp}{\mathrm{opp}}
\nc{\col}{\colon}
\nc{\bnum}{\be[{\rm(i)}]}
\nc{\oep}{\epsilon}
\nc{\qtext}{\quad\text}
\nc{\qtextq}[1]{\quad\text{#1}\quad}
\nc{\longtwoheadrightarrow}[1][]{\xymatrix{\ar@{->>}[r]^-{{#1}}&}}
\nc{\epiTo}[1][]{\longtwoheadrightarrow[{#1}]}
\nc{\epito}{\twoheadrightarrow}
\nc{\monoTo}[1][]{\xymatrix{\ar@{>->}[r]^-{{#1}}&}}
\nc{\sym}{\mathfrak{S}}
\nc{\shojiH}{\mathcal H^{\natural}}
\nc{\inp}[1]{{({#1})_{\mathrm{n}}}}
\nc{\rtl}{\rootl}
\nc{\wtd}{\widetilde}
\nc{\etens}{\boxtimes}
\nc{\ds}[1]{\mathrm{d}(#1)}
\nc{\rmat}[1]{{\mathbf{r}}_%
	{\mspace{-2mu}\raisebox{-.6ex}{${\scriptstyle{#1}}$}}}
\nc{\rmats}[1]{{\mathbf{r}}_%
	{\mspace{-2mu}\raisebox{-.6ex}{${\scriptscriptstyle{#1}}$}}}
\nc{\shc}{\mathcal{C}}
\nc{\shs}{\mathcal{S}}
\nc{\Fct}{{\on{Fct}}}
\nc{\tC}{\widetilde{\shc}}
\nc{\Zp}{\Z_{\ge0}}
\nc{\tPhi}{\widetilde{\Phi}}
\nc{\tT}{{\widetilde{\T}}}
\nc{\Ob}{\on{Ob}}
\nc{\bwr}{\mbox{\large$\wr$}}
\nc{\Img}{\on{Im}}
\nc{\Ab}{\mathcal{A}^{\mathrm{big}}}
\nc{\Sb}{\mathcal{S}^{\mathrm{big}}}
\nc{\As}{\mathcal{A}}
\nc{\Ss}{\mathcal{S}}
\nc{\ntens}{\widetilde{\otimes}}
\nc{\hR}{\widehat{R}}
\nc{\nconv}{\mathop{\mbox{\large $\odot$}}}
\nc{\snconv}{\mbox{\scriptsize$\odot$}}
\nc{\ts}{\tilde{s}}
\nc{\sho}{\mathcal{O}}
\nc{\bc}{\begin{cases}}
	\nc{\ec}{\end{cases}}
\nc{\slnh}{{\widehat{\mathfrak{sl}}_N}}
\nc{\UA}{U_q'(\slnh)}
\nc{\KR}{R_K}
\nc{\cQ}{\mathcal{Q}}
\nc{\Irr}{\mathcal{I}rr}
\nc{\tQ}{\widetilde{\cQ}}
\nc{\bs}{\mathbf{s}}
\nc{\bL}{\mathbb{L}}
\nc{\tg}{\tilde{g}}
\nc{\conv}{\mathbin{\mbox{\large $\circ$}}}
\nc{\shconv}{\mathbin{\large\diamond}}
\nc{\hconv}{\mathbin{\mbox{\Large $\shconv$}}}
\nc{\Rm}{R^{\mathrm{ren}}}
\nc{\bQ}{\ol{Q}}
\nc{\de}{\on{\textfrak{d}}}
\nc{\xmono}{\ar@{>->}}
\nc{\xepi}{\ar@{->>}}
\nc{\db}[1]{\raisebox{-.5ex}[2ex][1.8ex]{$#1$}}
\nc{\wb}[1]{\mbox{$\rule[-1.1ex]{0ex}{2ex}#1$}}
\nc{\univ}{\mathrm{univ}}
\nc{\rM}{{}^*\mspace{-2mu}M}
\nc{\lM}{M^*}
\nc{\uqm}{\uq\smod}
\nc{\tR}{\widetilde{R}_{\gamma,\beta}}
\nc{\tx}{\tilde{x}}
\nc{\bi}{\mathbf{i}}
\nc{\ttau}{\widetilde{\tau}}
\nc{\tEnd}{\on{\widetilde{E}nd}}
\nc{\tHom}{\on{\widetilde{H}om}}
\nc{\K}{{J}}
\nc{\Kex}{{\K}_{\mathrm{ex}}}
\nc{\Kfr}{{\K}_{\mathrm{f\mspace{.01mu}r}}}
\nc{\coro}{\cor}
\nc{\tB}{\widetilde{B}}
\nc{\seed}{\mathscr{S}}
\nc{\up}{\mathrm{up}}
\nc{\bfa}{\mathbf{a}}
\newlength{\mylength}
\title[Modified Ariki-Koike algebra and Yokonuma-Hecke like relations]
{Modified Ariki-Koike algebra and Yokonuma-Hecke like relations}
\author{Myungho Kim, Sungsoon Kim$^\ast$}
\address[M. Kim]{Department of Mathematics, Kyung Hee University, Seoul 02447, South Korea}
\email[M. Kim]{mkim@khu.ac.kr}
\thanks{The research of M.\ Kim was supported by the National Research Foundation of
	Korea (NRF) Grant funded by the Korea government(MSIP)
	(NRF-2022R1F1A1076214 and NRF-2020R1A5A1016126).}
\address[S. Kim - Corresponding Author]{ D\'epartement de Math\'ematiques, CNRS UMR 7352 -UPJV, 80039 Amiens, France}
\email[S. Kim]{sungsoon.kim@u-picardie.fr}
\thanks{ ($\ast$) S. Kim, corresponding author, is grateful of KIAS for the host during this work as well as l'IEA-France for partial supports (2023).}
\date{August 6, 2024}
\nc{\Hnr}{\mathcal H_{n,r}}
\begin{document}
	\maketitle
	\tableofcontents

	\begin{abstract}
		We find new presentations of the modified Ariki-Koike algebra (known also as Shoji's algebra) $\mathcal H_{n,r}$  over an  integral domain $R$ associated with a set of parameters  $q,u_1,\ldots,u_r$ in $R$.   
		It turns out that the algebra $\mathcal H_{n,r}$  has a set of generators $t_1,\ldots,t_n$ and $g_1,\ldots g_{n-1}$ subject to some defining relations similar to the relations of   Yokonuma-Hecke algebra. 
		We also obtain a presentation of $\mathcal H_{n,r}$ which is independent of the choice of $u_1,\ldots u_r$.  
		As applications of the presentations,  we find an explicit and direct isomorphism between the modified Ariki-Koike algebras with different choices of parameters $(u_1,\ldots,u_r)$.
We also find an explicit trace form on the algebra $\mathcal H_{n,r}$ which is symmetrizing provided the parameters $u_1,\ldots, u_r$ are invertible in  $R$. 
	We show that the symmetric group $\sym(r)$ acts on the algebra $\Hnr$,  and find a basis and a set of generators of the fixed subalgebra $\Hnr^{\sym(r)}$.
	\end{abstract}

	\section{Introduction}

In the article \cite{Shoji}, Shoji introduced a new presentation of the generic  Ariki-Koike algebra of the  complex reflection group $G(r,1,n)$ consisting of $n\times n$ monomial matrices whose non-zero entries are $r$-th roots of unity. 
Since this algebra is defined over the ring $\Z[q, q\inv,  u_1,\ldots, u_r][\Delta^{-1}]$,  where $\Delta=\prod_{1\le i<j\le r}(u_i-u_j)$ and $q,u_1,\ldots,u_r$ are indeterminates,
it can be specialized to an algebra over an integral domain $R$,  i.e.,  the $R$-algebra with the same generators and relations with respect to the parameters $q,u_1,\ldots,u_r$ in $R$,  provided that $q$ and $\prod_{1\le i<j\le r}(u_i-u_j)$ are invertible in $R$. 
Its definition, given  in  Definition  $\ref{def:Hnr}$, is with the generators $\{T_i,t_j \, \vert \, 1\le i\le n-1, \ 1\le j\le n \}$ and the defining relations  involving the parameters $q,u_1,\ldots,u_r$.
	Here $ T_1,\ldots,T_{n-1} $  are the generators of  the Iwahori-Hecke algebra  of the symmetric group $\mathfrak S(n)$ and $t_1,\ldots,t_n$ satisfy a common relation of degree $r$. 
	
	
	This algebra with new presentation is called the \emph{Modified Ariki-Koike algebra},  or \emph{Shoji algebra},   and we denote it by $\Hnr(R,q,u_1,\ldots,u_r)$, 
	or simply $\Hnr$, if there is no risk of confusion. 
	The algebra $\Hnr$ has the following $R$-basis (\cite[Theorem 3.7]{Shoji})
	\eq \label{eq:basis1}
	\set{t_1^{c_1}\cdots t_n^{c_n}T_w}{w \in \sym(n), \  0\le c_1,\ldots,c_n \le r-1}
	\eneq
	thus $\Hnr$ is a free $R$-module of rank $\mid  G(r,1,n) \mid =r^nn!$.
	One advantage of this new presentation  of the generic Ariki-Koike algebra of $G(r,1,n)$ is that,  using the fact that the generators $\{t_j \, \vert \, 1\le j\le n-1\}$ are  symmetric in the defining relations,  one can embed $\Hnr\otimes \mathcal H_{m,r}$ into $\mathcal H_{m+n,r}$ as in the case of the group algebras.  It follows that  all the irreducible representations of $\Hnr$ can be constructed as induced modules from certain subalgebras in an  analogous way to the group case  (\cite{Shoji}). 
In  \cite{SawadaShoji}, the algebra $\Hnr(R,q,u_1,\ldots,u_r)$ is studied in connection with the Ariki-Koike algebra over $R$ with the same parameters $q,u_1,\ldots,u_r$ (see \cite[Section 1]{SawadaShoji}).  Even though they are not isomorphic in general, 
	there is an interesting algebra homomorphism from the latter to the former,  which allowed the authors in \cite{SawadaShoji} to produce  some connections  between their representations.  For example,  an estimate of the decomposition numbers for the Ariki-Koike algebra  of $G(r,1,n)$  is obtained in terms of the decomposition numbers for the modified Ariki-Koike algebra. 
	
	\bigskip
	
	In this paper,  we obtain a  new presentation of the modified Ariki-Koike algebra $\Hnr$ and study their applications.  
	 In fact,  our new presentation is similar to the standard presentation given by Juyumaya (see, for example \cite{Juyumaya}) of the \emph{Yokonuma-Hecke algebra}  (of type A) $\mathcal Y_{n,r}$,
where the Yokonuma-Hecke algebras  are algebras over $\C[q,q\inv]$
introduced by Yokonuma (\cite{YokonumaHecke} ) as a generalization of Iwahori-Hecke algebras.
 We call this presentation of $\Hnr$ the \emph{Yokonuma-Hecke like presentation of $\Hnr$}. 
 So naturally   some  properties analogous to the ones we had for $ \mathcal Y_{n,r}$ can be developed for $\Hnr$ as well.

	\smallskip

	The modified Ariki-Koike algebra  $\Hnr$ and the Yokonuma-Hecke algebra $\mathcal Y_{n,r}$ are closely related. We briefly recall the relationship following  \cite[Section 3.1]{ERH}.
In \cite{Lusztig,JLPdA}, it is shown that the Yokonuma-Hecke algebra $\mathcal Y_{n,r}$   is isomorphic to  a direct sum of matrix algebras over Iwahori-Hecke algebras of type $A$ as  $\C[q,q\inv]$-algebras.   On the other hand, in \cite{SawadaShoji, HuStoll}, it is shown that there is an isomorphism of $R$-algebras between the modified Ariki-Koike algebra $\Hnr$  and a direct sum of matrix algebras over Iwahori-Hecke algebras of type $A$ over $R$. 
Interestingly enough, when  $R=\C[q,q\inv]$, those two direct sums of matrix algebras are the same and hence  the algebras $\mathcal Y_{n,r}$ and $\Hnr$ are isomorphic to each other.
In  \cite[Theorem 13]{ERH}, an explicit isomorphism between the modified Ariki-Koike algebra $\mathcal H_{n,r}$ over $R=\C[q, q\inv]$ with the choice $u_k=e^{\frac{2\pi \sqrt{-1} k}{r}}$ $(1\le k \le r)$ and the Yokonuma-Hecke algebra $\mathcal Y_{n,r}$ is given. Recall that $\mathcal H_{n,r}$ acts faithfully on a tensor space $V^{\otimes n}$ (\cite{SakamotoShoji, Shoji, HuStoll}).
In \cite{ERH} the authors construct a faithful representation of $\mathcal Y_{n,r}$ on the tensor space $V^{\otimes n}$, and  show that the image of  $\mathcal Y_{n,r}$ in $\End(V^{\otimes n})$ is equal to the image of $\Hnr$ with the choice of the parameters above.  Hence  an explicit formula for the isomorphism is given by identifying the standard generators in both algebras with the operators on $V^{\otimes n}$ under the specialization of the parameters $u_k=e^{\frac{2\pi \sqrt{-1} k}{r}}$ $(1\le k \le r)$.

\smallskip
Our new presentation of $\mathcal H_{n,r}$ with \emph{arbitrary} choice of parameters can be understood as a generalization of the isomorphism in \cite[Theorem 13]{ERH}. 
More precisely, we obtain a new generating set and defining relations of $\Hnr$, which becomes the standard presentation of the Yokonuma-Hecke algebra $\mathcal Y_{n,r}$ if we take $R=\C[q,q\inv]$ and $u_k=e^{\frac{2\pi \sqrt{-1} k}{r}}$ $(1\le k \le r)$.

\smallskip

	As the first step to obtain this Yokonuma-Hecke like presentation of $\Hnr$, we prove that 
the following set
	$$\left\{b_{k_1,\ldots,k_n}:=\displaystyle\prod_{1\le i\le n} \prod_{1\le j \le r, \, j \neq k_i} \dfrac{t_i-u_j}{u_{k_i}-u_j} \,\vert \, (k_1,\ldots,k_n) \in [1,r]^n \right\}$$ 
is an $R$-basis	of the subalgebra $R[t_1,\ldots,t_n]$ generated by the mutually commuting generators $t_1,\ldots,t_n$. 
	This basis forms a complete system of orthogonal idempotents of the subalgebra $R[t_1,\ldots,t_n]$ (Lemma \ref{orthonormality}).  
We set 
	$$g_i := T_i-(q-q^{-1}) \sum_{k_i < k_{i+1}} b_{k_1,\ldots,k_n} \quad (1\le i\le n-1).$$
and we prove that $g_i$'s satisfy the braid relations and the relations  $g_it_i=t_{i+1}g_i$, $g_i t_j =t_j g_i$ $(|i-j|\ge2)$. 
	
Then by using the basis in \eqref{eq:basis1},   we obtain  the following  two new $R$-bases of $\Hnr$~:
	\eq \label{eq:basis2}
	\set{t_1^{c_1}\cdots t_n^{c_n}g_w}{w \in \sym(n), \  0\le c_i \le r-1}\quad \text{and}
	\eneq
	\eq\label{eq:basis3}
	\set{b_{k_1,\ldots,k_n}g_w}{(k_1,\ldots,k_n) \in [1,r]^n, \ w\in \sym(n)}.
	\eneq
	\noindent
	These bases allow us to obtain two new presentations of $\Hnr$, as written in Theorem \ref{thm:main}, and in Theorem \ref{thm:presentation2}, respectively.
Note that if $R=\C[q,q\inv]$ and $u_k=e^{\frac{2\pi \sqrt{-1} k}{r}}$ $(1\le k \le r)$, then the presentation in Theorem \ref{thm:main} becomes the standard presentation of $\mathcal Y_{n,r}$, and the presentation in Theorem \ref{thm:presentation2} becomes the presentation of $\mathcal Y_{n,r}$ appeared in \cite[Section 2]{JLPdA}.

	As an application of the Yokonuma-Hecke like presentation in Theorem \ref{thm:main},  we construct a trace form on  $\Hnr$ in Section 4. 
	Recall that an $R$-linear map $f:\mathcal  A \to R$ from an $R$-algebra $\mathcal A$ to its base ring $R$ is called a \emph{trace form} if
	$
	f(xy)=f(yx) \,\, \text{for}\,\, x,y \in\mathcal  A.
	$  
	A trace form $f$ is called \emph{symmetrizing} if the bilinear form 
	$\mathcal A \times \mathcal  A \to R$ given by $(x,y) \mapsto f(xy)$
	is non-degenerate.  
	The trace form $\tau : \mathcal H_{n,r} \longrightarrow R$ in our case is given by
	\eq \label{eq:traceform}
	\tau(t_1^{c_1}\cdots t_n^{c_n}g_w)=\begin{cases}
		1 & \text{if}\ \  w=\id_{\sym(n)}, c_1=\cdots=c_n=0
		\\
		0 & \text{ otherwise }
	\end{cases}
	\eneq
	for $0\le c_1,\ldots, c_r \le r-1$, $w \in \sym(n)$. 
	Note that this formula of $\tau$ is exactly the same as the one in  \cite[Proposition 10]{CLPdA} for the Yokonuma-Hecke algebra $\mathcal Y_{n,r}$.  Since we have the Yokonuma-Hecke like presentation,  the same proof as in \cite[Proposition 10]{CLPdA}  works for $H_{n,r}$ to show that $\tau$ is a trace form (Corollary 4.2).
	We further show that  $\tau$ is symmetrizing  under the condition that the product $\sigma_r:=u_1 \cdots u_r$ is invertible in $R$ (Corollary 4.6) by  constructing an explicit dual basis to the basis in \eqref{eq:basis2}.
	\medskip
	
	The other new presentation  of $\Hnr$, in the Theorem \ref{thm:presentation2}, is independent of the choice of parameters $u_1,\ldots, u_r$  
 and this fact comes from the observation that the parameters $u_1,\ldots, u_r$ do not appear in the presentation. Hence  the algebras $H_{n,r}(R, q, u_1,\ldots u_r)$  are isomorphic 
	no matter what values we choose for $u_1,\ldots, u_r$.  
 Moreover, the presentation enables us to provide an explicit isomorphism directly in Corollary \ref{cor:para_indep}. 
 Note that the independence on the parameters of the modified Ariki-Koike algebra follows  from the structure theorem of $\Hnr$ in \cite{SawadaShoji, HuStoll} by showing that they are isomorphic to the direct sum of the matrix algebra, hence our claim is not new.  
But we emphasize that our isomorphism in Corollary \ref{cor:para_indep}   is explicit and is given directly by using only the terms of the standard generators of $\Hnr$.

	\medskip
	
	Note that there is a natural action of the symmetric group $\sym(r)$ on $R[t_1,\ldots,t_n]$
	given by $^\sigma b_{k_1,\ldots,k_n} : = b_{\sigma(k_1),\ldots,\sigma(k_n)}$ for $\sigma  \in\sym(r)$.
	Then we  show that  $\sym(r)$ acts on the algebra $\Hnr$ by the presentation in Theorem \ref{thm:presentation2}.
	In  \cite{JLPdA, JLPdA17},   the authors introduce a basis  $\{E_\chi\}$ of the commutative subalgebra $\C[q, q\inv][t_1,\ldots,t_n]$ of the Yokonuma-Hecke algebra $\mathcal Y_{n,r}$ and show that $\sym(r)$ acts on the basis and the algebra $\mathcal Y_{n,r}$. Indeed the basis $\{b_{k_1,\ldots,k_n}\}$ and the presentation in Theorem \ref{thm:presentation2} can be regarded as a generalization of  $\{E_\chi\}$ and the presentation in \cite[Section 2.2]{JLPdA17} so that one can consider the fixed subalgebra $\Hnr^{\sym(r)}$ of the modified Ariki-Koike algebra as it was done in \cite{JLPdA17} for the Yokonuma-Hecke algebras. 
	Similarly as in \cite{JLPdA17},  we obtain an $R$-basis  and a set of generators of the fixed subalgebra.   In the case of the Yokonuma Hecke algebra with $n \ge r$,  the  fixed subalgebra is known as the \emph{the algebra of braids and ties} and denoted by $BT_n$ (see \cite[Section 4 ]{JLPdA17} and references therein).  
 It is interesting  that such  knot-theoretically compelling algebras appear not only as subalgebras of $\mathcal Y_{n,r}$, but also as subalgebras of $\Hnr$.  One may expect more connections between $\Hnr$ and knot theory.
	\medskip

	Lastly, this paper is organized as follows.  In Section 2, we recall the definition of the modified Ariki-Koike algebra $\Hnr$.  We  introduce and study the basis elements $b_{k_1,\ldots, k_n}$ of the commutative subalgebra of $\Hnr$ generated by $t_1,\ldots,t_n$. 
	In Section 3, we show various relations among the elements $g_j$'s and others.  Then we obtain the bases \eqref{eq:basis2}, \eqref{eq:basis3},  and the new presentations of $\Hnr$.
	In Section 4,  we define the trace form \eqref{eq:traceform} and show that it is non-degenerate if $u_i$'s are all invertible in $R$.
 In Section 5,  we study the fixed subalgebra of the action of $\sym(r)$ on $\Hnr$.

	\medskip
{\bf Acknowledgments.}\ 
We thank Toshiaki Shoji for a valuable communication and referees for their precious comments and suggestions on the article.
	
	
	\section{Modified Ariki-Koike algebras}
	Let $R$ be an integral domain
	and $R^\times$ be the group of invertible elements in $R$.

	We take 
	$q , u_1,\ldots,u_r \in R$ such that
	\eq
	q \in R^\times  \quad \text{and} \quad
	\Delta:=\prod_{i>j} (u_i-u_j) \in  R^\times.
	\eneq

	\Def \label{def:Hnr}
	The \emph{modified Ariki-Koike algebra (also called the Shoji's algebra) } $\mathcal H_{n,r}=\mathcal H_{n,r}(R,q,u_1,\ldots,u_r)$ is an associative $R$-algebra generated by
	$$t_1,\ldots,t_n, T_1,\ldots,T_{n-1} $$
	with the defining relations
	
	\eq
	&& (T_i-q)(T_i+q^{-1})=0 \qquad  (1 \le i \le n-1) \label{rel:quad} \\
	&& (t_i-u_1)\cdots(t_i-u_r)=0 \qquad (1 \le i \le n)\\
	&& T_iT_{i+1}T_i=T_{i+1}T_iT_{i+1} \qquad (1 \le i \le n-2)\\
	&& T_iT_j=T_jT_i \qquad (|i-j| \ge 2)\\
	&& t_it_j=t_jt_i \qquad (1 \le i,j \le n)\\
	&& T_j t_k= t_k T_j \qquad  (k\neq j,j+1)\\
	&& T_{j-1} t_j= t_{j-1} T_{j-1} + \Delta^{-2} \displaystyle\sum_{1\le  c_1<c_2\le r} (u_{c_2}-u_{c_1})(q-q^{-1}) F_{c_1}(t_{j-1})F_{c_2}(t_{j})\\
	&& T_{j-1} t_{j-1}= t_{j} T_{j-1} - \Delta^{-2} \displaystyle\sum_{1 \le c_1<c_2 \le r} (u_{c_2}-u_{c_1})(q-q^{-1}) F_{c_1}(t_{j-1})F_{c_2}(t_{j}),
	\eneq
	
	where
	$\Delta=\displaystyle\prod_{i>j}(u_i-u_j)$, and $F_i(X) \in R[X]$ is the polynomial uniquely determined by the conditions that $\deg(F_i)=r-1$ and $F_{c}(u_{c'}) = \delta_{c,c'}\Delta$ for $1 \le c,c' \le r$.
\end{definition}
If we write $F_i(X)=\sum_{j=1}^r h_{i,j} X^{j-1}$,  then the matrix $H(u_1,\ldots,u_r):=(h_{i,j})_{1\le i,j\le r}$ is given by $H(u_1,\ldots,u_r) = \Delta V(u_1,\ldots,u_r)^{-1}$,  the adjugate matrix of the Vandermonde matrix $V(u_1,\ldots,u_r)$ where
\eq \label{eq:Vandermonde}
V(u_1,\ldots,u_r)_{i,j}=u_j^{i-1} \text{ for}  \quad 1\le i,j \le r.
\eneq

Note that 
\eq \label{eq:t^r}
t_i^r=\sum_{k=0}^{r-1} (-1)^{r-k+1} \sigma_{r-k} \, t_i^{k} 
\eneq
for each $1\le i\le n$, where $\sigma_r$ denotes the $r$-th elementary symmetric polynomial in $u_1,\ldots, u_r$. 
\begin{remark}
	\begin{enumerate}
		\item  Note that the algebra in \cite{Shoji} is the case in Definition \ref{def:Hnr} when $R=\Z[q,q^{\inv,}  u_1,\ldots,u_r,  \Delta] \subset \Q(q,u_1,\ldots,u_r)$,  where $q,u_1\ldots,u_r$ are indeterminates and $\Q(q,u_1,\ldots,u_r)$ denotes the ring of rational functions in them.  In this paper,  we consider the specializations of the one  in \cite{Shoji} following   \cite[Section 1.2]{SawadaShoji}.
		\item The generators $t_i$ and  $T_{j-1}$ correspond to $\xi_i$ and  $a_j$ in \cite{Shoji}, respectively.
	\end{enumerate}
\end{remark}

Let $\sym(n)$ be the symmetric group of $n$-letters.
The following fundamental result is proved in \cite{Shoji}.  
\begin{theorem}\cite[Theorem 3.7]{Shoji} \label{thm:Tbasis}
	The algebra $\mathcal H_{n,r}$  has an $R$-basis %
	\eq \label{eq:basis}
	\set{t_1^{c_1}\cdots t_n^{c_n}T_w}{w \in \sym(n), \  0\le c_1,\ldots,c_n \le r-1}.
	\eneq
\end{theorem}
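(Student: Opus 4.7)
The plan is the standard two-step verification: show the alleged basis spans $\mathcal H_{n,r}$, then show it has the expected rank $r^n n!$, hence no relations among the spanning elements are possible.

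For the \emph{spanning} step, I would proceed by a straightening algorithm. Any element of $\mathcal H_{n,r}$ is a word in the generators $T_i, t_j$. First, using the quadratic relation \eqref{rel:quad} together with the braid and commutation relations among the $T_i$'s, any monomial $T_{i_1}\cdots T_{i_\ell}$ reduces to an $R$-linear combination of $T_w$ for $w\in\sym(n)$ (this is the classical Iwahori--Hecke basis). Second, since the $t_i$'s mutually commute and satisfy $(t_i-u_1)\cdots(t_i-u_r)=0$, equivalently \eqref{eq:t^r}, any polynomial in the $t_i$'s reduces to an $R$-linear combination of monomials $t_1^{c_1}\cdots t_n^{c_n}$ with $0\le c_i\le r-1$. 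The nontrivial ingredient is the rule for moving a $T_{j-1}$ past a single $t_j$ or $t_{j-1}$: the relations
$T_{j-1}t_j = t_{j-1}T_{j-1} + (\text{correction})$ and $T_{j-1}t_{j-1} = t_jT_{j-1} - (\text{correction})$
together with the commutation $T_jt_k=t_kT_j$ for $k\ne j,j+1$ allow any $t$-factor to be pushed past any $T$-factor, modulo extra terms that are polynomials in the $t$'s (the corrections lie entirely in $R[t_{j-1},t_j]$ since the $F_c$ are polynomials). Iterating with a suitable induction on the number of $T$-letters and, for a fixed number of $T$-letters, on a lexicographic ordering that tracks how far each $t$-factor sits to the right of the rightmost $T$, one reduces every word to the claimed normal form.

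For \emph{linear independence} I would build a faithful representation of rank $r^n n!$. The cleanest route is to imitate the Ariki--Koike situation: let $M$ be the free $R$-module with basis indexed by pairs $(\mathbf c,w)$ with $\mathbf c\in[0,r-1]^n$ and $w\in\sym(n)$, and define operators $\widehat t_i$ and $\widehat T_j$ on $M$ by the formulas suggested by the defining relations, using the polynomials $F_c$ to handle the mixed action. One must verify that these operators satisfy all the relations of $\mathcal H_{n,r}$, so that $M$ becomes a module, and that the element $t_1^{c_1}\cdots t_n^{c_n}T_w$ acts on the basis vector indexed by $(0,\ldots,0;\id)$ by producing (up to a unit) the basis vector indexed by $(\mathbf c,w)$. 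Since these images are distinct basis vectors of a free module, the $r^n n!$ elements are linearly independent over $R$.

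An alternative, which may be easier to make rigorous, is a specialization argument: when $R=\Z[q^{\pm1},u_1,\ldots,u_r][\Delta^{-1}]$ with $q,u_i$ indeterminates, one shows directly that after extending scalars to the fraction field and further specializing $q\to1$, the algebra $\mathcal H_{n,r}$ degenerates to the group algebra of the wreath product $G(r,1,n)=(\Z/r\Z)\wr\sym(n)$, which has dimension exactly $r^n n!$; Nakayama/lifting then implies the spanning set from the first step is a free $R$-basis. The general case follows by base change through $R$.

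The main obstacle is the independence half, and specifically the explicit construction and verification of the action on $M$: one has to check each of the mixed relations $T_{j-1}t_j = t_{j-1}T_{j-1}+\cdots$ on every basis vector, which is essentially a computation with the interpolation polynomials $F_c$ and the Vandermonde factor $\Delta$ from \eqref{eq:Vandermonde}. The spanning step is routine once the right induction scheme is set up, and I expect no conceptual difficulty there.
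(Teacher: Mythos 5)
This theorem is not proved in the paper at all: it is imported verbatim from \cite[Theorem 3.7]{Shoji} (with the silent extension to a general integral domain $R$ following \cite[Section 1.2]{SawadaShoji}), so there is no in-paper argument to compare against. The relevant comparison is with Shoji's own proof, whose strategy your outline essentially reproduces: a straightening/spanning step, plus linear independence via a faithful representation --- in Shoji's case the explicit tensor-space representation that this paper itself invokes later (see the reference to \cite[Lemma 3.7]{SakamotoShoji} before Proposition \ref{prop:braid}).

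The genuine gap is in your independence step as you primarily propose it. You posit a module $M$ of rank $r^n n!$ with operators $\widehat t_i$, $\widehat T_j$ ``defined by the formulas suggested by the defining relations,'' but no such formulas are given, and there is no canonical candidate: the mixed relations $T_{j-1}t_j = t_{j-1}T_{j-1} - B_{j-1}$ do not by themselves determine a well-defined action on a normal-form basis (one must \emph{choose} operators and then verify all relations, including the braid relations interacting with the correction terms $B_{j-1}$). That verification is the entire content of the theorem; as written, the main route is a restatement of the problem rather than a proof. Your alternative specialization route is much closer to being complete, but two points need repair. First, at $q=1$ the algebra does not degenerate to the group algebra of $G(r,1,n)$ unless the $u_i$ are the $r$-th roots of unity; it degenerates to the crossed product $\bigl(R[X]/\prod_j(X-u_j)\bigr)^{\otimes n}\rtimes \sym(n)$, which happens to have the same rank $r^n n!$. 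Second, to avoid circularity you must not merely assert the rank of the $q=1$ specialization: you need a surjection from the presented $q=1$ algebra onto the honestly constructed crossed product (check that the crossed product satisfies the degenerated relations, which is easy since $B_{j-1}$ carries a factor $q-q^{-1}$), and then combine with the spanning step to conclude freeness at $q=1$, lift to the generic ring by a gcd argument in the UFD $\Z[q^{\pm1},u_1,\ldots,u_r][\Delta^{-1}]$, and finally base-change to arbitrary $R$ (which is legitimate because presentations commute with base change). With those repairs the specialization argument is a valid, genuinely different route from Shoji's representation-theoretic one; the representation route as you sketch it is not yet a proof.
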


\medskip

It follows that
the subalgebra generated by  $T_1,\ldots T_{n-1}$ is isomorphic to the Iwahori-Hecke algebra of the symmetric group $\sym(n)$. 
The subalgebra $R[t_1,\ldots,t_n]$ generated by $t_1,\ldots, t_n$ is isomorphic to  the quotient ring $R[X_1,\ldots X_n]/(\set{\prod_{j=1}^r(X_i-u_j)}{1\le i\le n})$ of the polynomial ring $R[X_1,\ldots X_n]$.
Note that $R[t_1,\ldots,t_n]$ is a free $R$-module with a basis $\set{t_1^{c_1}\cdots t_n^{c_n}}{  0\le c_1,\ldots, c_n \le r-1}$.

\subsection{The subalgebra $R[t_1,\ldots,t_n]$}\label{formulasforbasisB}\
Set
\eqn
[1,r]:=\set{a \in \Z}{1\le a\le r} \quad \text{and} \quad [1,r]^n= \overbrace{[1,r]\times \cdots \times[1,r]}^{n-\text{times}}.
\eneqn

For each $(k_1,\ldots k_n) \in [1,r]^n$, define
\eqn
&&\tilde b_{k_1,\ldots,k_n}:=\displaystyle\prod_{1\le i\le n} \prod_{1\le j \le r, \, j \neq k_i} \dfrac{X_i-u_j}{u_{k_i}-u_j} \in R[X_1,\ldots,X_n] \quad \text{and}\\
&&b_{k_1,\ldots,k_n}:=\displaystyle\prod_{1\le i\le n} \prod_{1\le j \le r, \, j \neq k_i} \dfrac{t_i-u_j}{u_{k_i}-u_j} \in R[t_1,\ldots,t_n].
\eneqn
In other words, the element  $\tilde b_{k_1,\ldots,k_n}$ is the  polynomial in $X_1,\ldots, X_n$ of degree $r-1$ for each $X_i$ such that $\tilde b_{k_1,\ldots,k_n}(u_{j_1},\ldots,u_{j_n})=\delta _{(k_1,\ldots,k_n),(j_1,\ldots,j_n)}$  for any $(j_1,\ldots,j_n) \in [1,r]^n$  and
$b_{k_1,\ldots,k_n}$ is the image of $\tilde b_{k_1,\ldots,k_n}$ in the quotient ring $R[t_1,\ldots,t_n]$.

\begin{lemma}\label{orthonormality}
	The set
	$$\set{b_{k_1,\ldots,k_n} \in R[t_1,\ldots,  t_n]}{(k_1,\ldots,k_n)\in [1,r]^n}$$
	forms  an $R$-basis of $R[t_1,\ldots,t_n]$.
	It is  a complete system  of orthogonal idempotents of $R[t_1,\ldots,t_n]$.
\end{lemma}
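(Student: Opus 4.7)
The plan is to reduce the claim to the one-variable case by exploiting that the $t_i$'s commute and that $R[t_1,\ldots,t_n]$ factors as a tensor product over $R$ of the one-variable subalgebras $R[t_i] \cong R[X]/(\prod_{j=1}^r (X-u_j))$. Once the one-variable version is established, the $n$-variable statement follows by taking an $n$-fold tensor product of isomorphisms.

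For the one-variable case, the key step is to introduce the evaluation map
\[
\phi \colon R[X]/\Bigl(\prod_{j=1}^r (X-u_j)\Bigr) \longrightarrow R^r, \qquad p(X) \longmapsto (p(u_1),\ldots,p(u_r)),
\]
which is a well-defined $R$-algebra homomorphism, since $\prod_{j=1}^r(X-u_j)$ lies in the kernel of the unreduced evaluation $R[X] \to R^r$, and $R^r$ is endowed with componentwise multiplication. The standard $R$-basis $\{1, X, \ldots, X^{r-1}\}$ of the source maps to the columns of the Vandermonde matrix $V(u_1,\ldots,u_r)$ from \eqref{eq:Vandermonde}. Because $\det V = \pm\Delta \in R^\times$, these images form an $R$-basis of $R^r$, so $\phi$ is an $R$-algebra isomorphism. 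The definition of $\tilde b_k$ as the Lagrange interpolation polynomial gives $\phi(b_k) = e_k$, the $k$-th standard basis vector. Since $\{e_1,\ldots,e_r\}$ is an $R$-basis of $R^r$ forming a complete system of orthogonal idempotents, transporting back along $\phi^{-1}$ yields the corresponding statement for $\{b_1,\ldots,b_r\}$ in $R[t_i]$: namely $b_k b_\ell = \delta_{k\ell}\, b_k$ and $\sum_{k=1}^r b_k = 1$.

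For the $n$-variable step, I would observe that the natural map
\[
R[t_1] \otimes_R \cdots \otimes_R R[t_n] \;\longrightarrow\; R[t_1,\ldots,t_n]
\]
is an isomorphism of $R$-algebras, since the left-hand side is a free $R$-module with basis $\{t_1^{c_1}\otimes\cdots\otimes t_n^{c_n}\}_{0\le c_i\le r-1}$ of rank $r^n$, which maps onto the basis of Theorem \ref{thm:Tbasis} restricted to the $t$-part. Under the identification, $b_{k_1,\ldots,k_n}$ corresponds to $b_{k_1}\otimes\cdots\otimes b_{k_n}$. Tensoring $n$ copies of the one-variable isomorphism $\phi$ yields an $R$-algebra isomorphism $R[t_1,\ldots,t_n]\cong R^{[1,r]^n}$ under which $b_{k_1,\ldots,k_n}$ corresponds to the characteristic function of $(k_1,\ldots,k_n)$. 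Since the characteristic functions form a complete system of orthogonal idempotents and an $R$-basis of $R^{[1,r]^n}$, the same holds for $\{b_{k_1,\ldots,k_n}\}_{(k_1,\ldots,k_n)\in[1,r]^n}$ in $R[t_1,\ldots,t_n]$.

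The only real obstacle is ensuring that the one-variable identification via $\phi$ is genuinely an isomorphism over $R$; this rests precisely on the invertibility of $\Delta$, which is built into the standing hypothesis. Everything else — multiplicativity of $\phi$, idempotency and orthogonality of the $e_k$, and the tensor-product factorisation — is routine.
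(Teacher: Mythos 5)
Your argument is correct, and it reaches the conclusion by a genuinely different route from the paper. The paper works entirely inside the algebra: it first establishes the relation $(t_i-u_{k_i})\,b_{k_1,\ldots,k_n}=0$, deduces $f(t_1,\ldots,t_n)\,b_{k_1,\ldots,k_n}=f(u_{k_1},\ldots,u_{k_n})\,b_{k_1,\ldots,k_n}$ (this is \eqref{actionti}, which is then reused throughout the paper, e.g.\ in Lemma \ref{relationTiwithBasis}), and from this reads off idempotency, orthogonality, linear independence, and finally the interpolation identity $f(t)=\sum f(u_{k_1},\ldots,u_{k_n})b_{k_1,\ldots,k_n}$ giving completeness. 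You instead build the external model: the evaluation map onto $R^r$ is an isomorphism because the Vandermonde determinant equals $\pm\Delta\in R^\times$, and the $n$-variable case is the $n$-fold tensor power, identifying $R[t_1,\ldots,t_n]$ with $R^{[1,r]^n}$ and the $b_{k_1,\ldots,k_n}$ with the standard idempotents. Your version has the advantage of making the spanning step airtight (the paper's ``linearly independent, hence a basis of a free module of the same rank'' is not valid over a general integral domain without the extra observation that the transition matrix from the monomial basis has invertible determinant --- which is exactly the Vandermonde fact you invoke), and it exhibits $R[t_1,\ldots,t_n]$ as a split commutative algebra, which illuminates the later $\sym(r)$-action. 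The paper's version, on the other hand, produces the eigen-relation \eqref{actionti} explicitly, which is the computational workhorse for the rest of Section 2; if you adopt your proof you would still want to record that identity (it follows immediately from your isomorphism). Both proofs rest on the same two inputs: $\Delta\in R^\times$ and the freeness of $R[t_1,\ldots,t_n]$ on the monomials $t_1^{c_1}\cdots t_n^{c_n}$, $0\le c_i\le r-1$, coming from Theorem \ref{thm:Tbasis}.
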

\begin{proof}
	Note that
	\eqn
	t_i \prod_{1\le j\le r, \,  j\neq c} (t_i-u_j)=u_c \prod_{1\le j\le r, \, j\neq c} (t_i-u_j)
	\eneqn
	for any $1\le i\le n$, $1\le c \le r$. 
	Hence we have
	$(t_i-u_{k_i})b_{k_1,\ldots,k_n}=0$
	so that
	\eq
	f(t_1,\ldots, t_n)  b_{k_1,\ldots, k_n} =f(u_{k_1}, \ldots, u_{k_n})   b_{k_1,\ldots, k_n} \label{actionti}
	\eneq
	for any $f\in R[X_1,\ldots,X_n]$ and $(k_1,\ldots, k_n) \in [1,r]^n$.
	It follows that  $b_{k_1,\ldots,k_n}$'s are idempotents and orthogonal to each other. Thus they are linearly independent over $R$ and hence
	form  an $R$-basis of $R[t_1,\ldots,t_n]$, which is a free $R$-module of rank $r^n$.
	It follows that
	\eqn 
	f(t_1,\ldots,t_n)=\sum_{(k_1,\ldots,k_n) \in [1,r]^n} f(u_{k_1},\ldots, u_{k_n}) b_{k_1,\ldots, k_n}
	\eneqn
	for any polynomial $f(X_1,\ldots,X_n)$. In particular we have $1 = \sum_{(k_1,\ldots,k_n) \in [1,r]^n}  b_{k_1,\ldots, k_n}$, as desired.
\end{proof}

Set
\eqn
B_{j-1}:=-\Delta^{-2} \displaystyle\sum_{c_1<c_2} (u_{c_2}-u_{c_1})(q-q^{-1}) F_{c_1}(t_{j-1})F_{c_2}(t_{j})
\eneqn
so that
\eqn\label{Tjti}
&T_{j-1} t_j= t_{j-1} T_{j-1} -B_{j-1} \quad \text{and}\quad
&T_{j-1} t_{j-1}= t_{j} T_{j-1} +B_{j-1}.
\eneqn

Let $\widetilde B_{j-1} :=-\Delta^{-2} \displaystyle\sum_{c_1<c_2} (u_{c_2}-u_{c_1})(q-q^{-1}) F_{c_1}(X_{j-1})F_{c_2}(X_{j}). $
Then
\eqn
&&\widetilde B_{j-1} (u_{k_1},\ldots,u_{k_n}) = -\Delta^{-2} \displaystyle\sum_{c_1<c_2} (u_{c_2}-u_{c_1})(q-q^{-1}) \delta_{c_1=k_{j-1}, c_2=k_j} \Delta^2 \\
&&= \begin{cases}
	(q-q^{-1})(u_{k_{j-1}}-u_{k_j}) & \text{if} \ k_{j-1} < k_j, \\
	0 & \text{otherwise}.
\end{cases}
\eneqn
It follows that
\eq
B_{j-1}=(q-q^{-1}) \sum_{(k_1,\ldots, k_n) \in [1,r]^n, \ k_{j-1} < k_j } (u_{k_{j-1}}-u_{k_j}) b_{k_1,\ldots, k_n}.
\eneq

\begin{lemma}\label{relationTiwithBasis} 
	For $1\le p \le n-1$, we have
	\eqn
	&&T_p b_{k_1,\ldots,k_{p},k_{p+1},\ldots,k_n} - b_{k_1,\ldots,k_{p+1},k_p,\ldots,k_n} T_p
	\\
	&&=(q-q^{-1} ) \left(\delta(k_{p} < k_{p+1}) b_{k_1,\ldots,k_{p},k_{p+1},\ldots,k_n} - \delta(k_{p} > k_{p+1}) b_{k_1,\ldots,k_{p+1},k_{p},\ldots,k_n}  \right).
	\eneqn
\end{lemma}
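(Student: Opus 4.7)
Setting $I=(k_1,\ldots,k_n)$ and $I'=(k_1,\ldots,k_{p+1},k_p,\ldots,k_n)$ (and writing $b_I:=b_{k_1,\ldots,k_n}$, etc.), I would deduce the stated identity as a special case of a general divided-difference formula: for every polynomial $f\in R[X_1,\ldots,X_n]$,
\[
T_p\cdot f(t_1,\ldots,t_n)-(\sigma_p f)(t_1,\ldots,t_n)\cdot T_p \;=\; B_p \cdot (\partial_p f)(t_1,\ldots,t_n),
\]
where $\sigma_p$ acts on $R[X_1,\ldots,X_n]$ by swapping $X_p\leftrightarrow X_{p+1}$ and $\partial_p f:=(f-\sigma_p f)/(X_p-X_{p+1})$ is the Newton divided difference (well-defined as a polynomial since $f-\sigma_p f$ is divisible by $X_p-X_{p+1}$). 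I would prove this by induction on the total degree of $f$. The base case $f=1$ is immediate; for the inductive step one checks $X_i f$ for each $i$. For $i\notin\{p,p+1\}$ this is clear, since $T_p$ commutes with $t_i$ and $\sigma_p$, $\partial_p$ leave $X_i$ alone. For $i\in\{p,p+1\}$ one applies the rewritten relations $T_p t_p = t_{p+1}T_p + B_p$ and $T_p t_{p+1} = t_p T_p - B_p$ together with the Leibniz rule $\partial_p(X_p g) = X_p \partial_p g + \sigma_p g$ (and its counterpart for $X_{p+1}$); the reconciliation uses the polynomial equality $(X_p-X_{p+1})\partial_p f = f-\sigma_p f$, which upon substituting $X_i\mapsto t_i$ becomes $(t_p - t_{p+1})(\partial_p f)(t_1,\ldots,t_n)=f(t_1,\ldots,t_n)-(\sigma_p f)(t_1,\ldots,t_n)$.

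Next I would specialise the formula to $f=\tilde b_{k_1,\ldots,k_n}$. Because $\tilde b_I$ factors as $\prod_{i\ne p,p+1}\ell_{k_i}(X_i)\cdot \ell_{k_p}(X_p)\ell_{k_{p+1}}(X_{p+1})$, where $\ell_c$ is the univariate Lagrange interpolant at $u_c$, swapping $X_p\leftrightarrow X_{p+1}$ is the same as swapping the superscripts $k_p\leftrightarrow k_{p+1}$; thus $\sigma_p\tilde b_I=\tilde b_{I'}$ and the general identity yields
\[
T_p b_I - b_{I'} T_p = B_p \cdot (\partial_p\tilde b_I)(t_1,\ldots,t_n).
\]

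Finally I would expand the right-hand side in the orthogonal idempotent system $\{b_J\}$ of Lemma~\ref{orthonormality}. Using $g(t_1,\ldots,t_n)=\sum_J g(u_{j_1},\ldots,u_{j_n})\,b_J$ (established in the proof of that lemma) together with $B_p=(q-q^{-1})\sum_{J:\,j_p<j_{p+1}}(u_{j_p}-u_{j_{p+1}})\,b_J$, orthogonality gives $B_p b_J=(q-q^{-1})(u_{j_p}-u_{j_{p+1}})b_J$ when $j_p<j_{p+1}$ and $0$ otherwise. For $j_p\ne j_{p+1}$ the interpolation property yields $(\partial_p\tilde b_I)(u_J)=(\delta_{I,J}-\delta_{I',J})/(u_{j_p}-u_{j_{p+1}})$, so after multiplying only $J=I$ (forcing $k_p<k_{p+1}$) and $J=I'$ (forcing $k_p>k_{p+1}$) contribute, producing precisely $+(q-q^{-1})b_I$ and $-(q-q^{-1})b_{I'}$ respectively; contributions with $j_p=j_{p+1}$ are annihilated by $B_p$, and when $k_p=k_{p+1}$ one has $\partial_p\tilde b_I=0$ directly, matching the vanishing right-hand side. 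The main obstacle is the inductive step of the general identity: the sign asymmetry between $T_p t_p$ and $T_p t_{p+1}$ must align exactly with the antisymmetry of $\partial_p$, and one must use the polynomial divisibility of $f-\sigma_p f$ by $X_p-X_{p+1}$ carefully when substituting into the $t_i$'s.
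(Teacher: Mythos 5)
Your proof is correct, but it reaches the lemma by a genuinely different route than the paper. The paper works directly with the factorized form of $b_{x,y}$: it splits off the $\sigma_p$-symmetric factors $\prod_{j\neq x,y}(t_p-u_j)(t_{p+1}-u_j)$ (which commute with $T_p$ by the two-variable identity $T_1(t_1-u)(t_2-u')=(t_2-u)(t_1-u')T_1+B_1(u-u')$ with $u=u'$), applies that same identity once to the single antisymmetric pair of linear factors, and then expands the resulting product $G(t_1,t_2)\,B_1/(u_x-u_y)$ in the orthogonal idempotents; the general case reduces to $n=2$ because the remaining factors commute with $T_p$ and act as the identity on the relevant $b_J$. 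You instead prove the general Demazure--Lusztig-type commutation formula $T_pf(t)-(\sigma_pf)(t)T_p=B_p\,(\partial_pf)(t)$ for arbitrary $f$ by induction on degree, and then specialize to $f=\tilde b_{k_1,\ldots,k_n}$; note that the paper's two-variable identity is exactly your formula for $f=(X_p-u)(X_{p+1}-u')$, where $\partial_pf=u-u'$. Your inductive step is sound (the needed commutations $t_{p\pm 1}B_p=B_pt_{p\pm 1}$ hold since $B_p\in R[t_1,\ldots,t_n]$, and the divisibility of $f-\sigma_pf$ by $X_p-X_{p+1}$ holds over any commutative ring), and your handling of the terms with $j_p=j_{p+1}$ --- annihilated by $B_p$ rather than evaluated --- is the right way to avoid the spurious division by zero. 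What your approach buys is a reusable, basis-free description of the commutator of $T_p$ with the whole subalgebra $R[t_1,\ldots,t_n]$ as a divided-difference operator, from which both this lemma and Lemma \ref{Ti_symm_pol} (the case $\sigma_pf=f$, so $\partial_pf=0$) drop out immediately; what the paper's computation buys is brevity, since it only ever manipulates the specific Lagrange factors it needs. Both arguments converge on the same final step, the expansion of $B_p$ times a $t$-polynomial in the orthogonal idempotent system of Lemma \ref{orthonormality}.
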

\begin{proof}
	First, we will prove when $n=2$ and $p=1$ case. Note that
	\eq
	T_1(t_1-u)(t_2-u') =(t_2-u)(t_1-u')T_1+B_1(u-u')
	\eneq
	for any $u,u'\in R$.
	
	Hence we have
	\eqn
	&&T_1b_{x,y} = \left(\prod_{1\le j \le r, \, j \neq x} \dfrac{t_1-u_j}{u_{x}-u_j}\right) \left(\prod_{1\le j \le r, \, j \neq y} \dfrac{t_2-u_j}{u_{y}-u_j}\right) \\
	&&=T_1 \,\left(\prod_{1\le j \le r, \, j \neq x,y} \left(\dfrac{t_1-u_j}{u_{x}-u_j}\right) \left(\dfrac{t_2-u_j}{u_{y}-u_j}\right) \right) \left(\left(\dfrac{t_1-u_y}{u_{x}-u_y}\right)\left(\dfrac{t_2-u_x}{u_{y}-u_x}\right) \right)^{\delta(x\neq y)} \\
	&&=\left(\prod_{1\le j \le r, \, j \neq x,y} \left(\dfrac{t_1-u_j}{u_{x}-u_j}\right) \left(\dfrac{t_2-u_j}{u_{y}-u_j}\right) \right) \, T_1 \, \left(\left(\dfrac{t_1-u_y}{u_{x}-u_y})\right)\left(\dfrac{t_2-u_x}{u_{y}-u_x}\right) \right)^{\delta(x\neq y)} \\
	&&=\begin{cases}
		\left(\displaystyle\prod_{1\le j \le r, \, j \neq x,y} \left(\dfrac{t_1-u_j}{u_{x}-u_j}\right) \left(\dfrac{t_2-u_j}{u_{y}-u_j}\right) \right)  \left(\left(\dfrac{t_2-u_y}{u_{x}-u_y}\right) \left(\dfrac{t_1-u_x}{u_{y}-u_x}\right)\, T_1 +\dfrac{B_1}{u_x-u_y} \right)  & \text{if } \ x\neq y\\
		\left(\displaystyle\prod_{1\le j \le r, \, j \neq x} \left(\dfrac{t_1-u_j}{u_{x}-u_j}\right) \left(\dfrac{t_2-u_j}{u_{y}-u_j}\right) \right)  T_1
		& \text{if } \ x= y.
	\end{cases}
	\eneqn

	On the  other hand, we have
	\eqn
	&&b_{y,x} T_1 = \left(\prod_{1\le j \le r, \, j \neq y} \dfrac{t_1-u_j}{u_{y}-u_j}\right) \left(\prod_{1\le j \le r, \, j \neq x} \dfrac{t_2-u_j}{u_{x}-u_j}\right) \, T_1 \\
	&&=\left(\prod_{1\le j \le r, \, j \neq x,y} \left(\dfrac{t_1-u_j}{u_{y}-u_j}\right) \left(\dfrac{t_2-u_j}{u_{x}-u_j}\right) \right) \left((\dfrac{t_1-u_x}{u_{y}-u_x})(\dfrac{t_2-u_y}{u_{x}-u_y}) \right)^{\delta(x\neq y)}
	\, T_1.
	\eneqn
	
	Hence
	\eqn
	T_1 \, b_{xy} - b_{y,x} \,  T_1 =
	\delta(x\neq y)\left(\displaystyle\prod_{1\le j \le r, \, j \neq x,y} \left(\dfrac{t_1-u_j}{u_{x}-u_j}\right) \left(\dfrac{t_2-u_j}{u_{y}-u_j}\right)\right)  \dfrac{B_1}{u_x-u_y}
	\eneqn
	Set
	\eqn
	G(X_1,X_2):=\displaystyle\prod_{1\le j \le r, \, j \neq x,y} \left(\dfrac{X_1-u_j}{u_{x}-u_j}\right) \left(\dfrac{X_2-u_j}{u_{y}-u_j}\right)  \in R[X_1,X_2].
	\eneqn
	Then we have
	\eqn
	\begin{cases}
		G(u_a,u_b)=0 & \text{if } \ a \neq x,y \ \text{or} \ b\neq x,y, \\
		G(u_x,u_y)=G(u_y,u_x)=1.
	\end{cases}
	\eneqn
	Hence
	\eqn
	G(t_1,t_2) =  a_{x,x}b_{x,x} +a_{y,y}b_{y,y} + b_{x,y}+b_{y,x}
	\eneqn
	for some $a_{x,x}, a_{y,y} \in R$.
	Thus
	\eqn
	G(t_1,t_2) \dfrac{B_1}{u_x-u_y}  &&= G(t_1,t_2) \left(\dfrac{q-q^{-1}}{u_x-u_y}  \right) \left(\sum_{1\le \al < \beta\le r} (u_\al-u_\beta) b_{\al,\beta}\right) \\
	&&=\left(\dfrac{q-q^{-1}}{u_x-u_y}  \right) \left(   \delta(x<y )(u_x-u_y ) b_{x,y}+   \delta(x>y )(u_y-u_x )b_{y,x}  \right) \\
	&&=(q-q^{-1}) \left( \delta(x<y ) b_{x,y}- \delta(x>y ) b_{y,x}  \right),
	\eneqn
	as desired.
	
	Now assume that $n \ge 2$ and $1 \le p \le n-1$.
	Note that $$\prod_{i\neq p,p+1} \left( \prod_{1\le j \le r, \ k_i \neq j} \dfrac{t_i-u_j}{u_{k_i}-u_j} \right)$$
	commutes with $T_p$.
	Hence  by the above calculation for $n=2, p=1$ case, we have
	\noindent
	\eqn
	&&T_p b_{k_1,\ldots,k_{p},k_{p+1},\ldots,k_n} - b_{k_1,\ldots,k_{p+1},k_p,\ldots,k_n} T_p
	\\
	&&=(q-q^{-1} ) \left(\delta(k_{p} < k_{p+1}) b_{k_1,\ldots,k_{p},k_{p+1},\ldots,k_n} - \delta(k_{p} > k_{p+1}) b_{k_1,\ldots,k_{p+1},k_{p},\ldots,k_n}  \right)
	\prod_{i\neq p,p+1} \left( \prod_{1\le j \le r, \ k_i \neq j} \dfrac{t_i-u_j}{u_{k_i}-u_j} \right).
	\eneqn
	Since
	$$
	\dfrac{t_i-u_j}{u_{k_i}-u_j}b_{k_1,\ldots,k_{p},k_{p+1},\ldots,k_n}=b_{k_1,\ldots,k_{p},k_{p+1},\ldots,k_n} \quad \text{and} \quad
	\dfrac{t_i-u_j}{u_{k_i}-u_j}b_{k_1,\ldots,k_{p+1},k_{p},\ldots,k_n}=b_{k_1,\ldots,k_{p+1},k_{p},\ldots,k_n}
	$$
	for any $i\neq k_p,k_{p+1}$ and $1\le j\le r$ with $k_i \neq j$,
	we get the desired result.
\end{proof}

\subsection{Actions of symmetric groups on $R[t_1,\ldots,t_n]$}

Note that the symmetric group $\sym(n)$ acts on $R[t_1,\ldots,t_n]$ as $R$-algebra automorphisms by 
$$w.t_i:=t_{w(i)} \qquad \text{ for $1\le i\le n$, $w \in \sym(n)$ }.$$
It  also acts on the set $[1,r]^n$ by the place permutations $w.(k_1,\ldots,k_n):=(k_{w^{-1}(1)}, \ldots, k_{w^{-1}(n)})$.
Then we have
\eq
w.b_{(k_1,\ldots,k_n)} = b_{w.(k_1,\ldots,k_n)}\qquad \text{for} \ w \in \sym(n), \ (k_1,\ldots,k_n) \in [1,r]^n.
\eneq

On the other hand, the symmetric group $\sym(r)$ acts on $[1,r]^n$ by 
$$^\sigma (k_1,\ldots,k_n) := (\sigma(k_1),\ldots,\sigma(k_n)) \qquad \text{ for $1\le k_i \le r$, $\sigma \in \sym(r)$},$$
which induces an action on the basis given by
\eq \label{eq:sym_r action}
^\sigma b_{k_1,\ldots,k_n} : = b_{\sigma(k_1),\ldots,\sigma(k_n)}. 
\eneq
Since 
$${^\sigma b_{k_1,\ldots,k_n}} {^\sigma b_{k'_1,\ldots,k'_n} } = \delta_{(k_1,\ldots,k_n), (k'_1,\ldots,k'_n)} {^\sigma b_{k_1,\ldots,k_n}}, $$
the $R$-linear map
$\sigma$ extends to an $R$-algebra automorphism and  hence the group $\sym(r)$ acts on $R[t_1,\ldots,t_n]$.

Note that the actions of $\sym(n)$ and  $\sym(r)$ on $[1,r]^n$ are commuting to each other.

\begin{remark} \label{rem:OP}
	Let $OP_r(n)$ be the set of sequences $(I_1,\ldots,I_r)$ in subsets of $[1,n]$ such that $I_i\cap I_j=\emptyset$ for $i\neq j$ and $\displaystyle \bigsqcup_{ 1 \le i\le r} I_i = [1,n]$.
	Then there is a bijection between the sets $ [1,r]^n$ and $OP_r(n)$ given by
	\eqn
	\Psi: [1,r]^n &\to& OP_r(n) \\
	f &\mapsto & (f\inv(1),\ldots,f\inv(r)),
	\eneqn
	where we identify the set $[1,r]^n$ with the set of functions from $[1,n]$ to $[1,r]$.
	Then the action of $\sym(n)\times \sym(r)$ on $[1,r]^n$ is nothing  but
	\eqn
	w.(^\sigma f)=^\sigma(w. f)= \sigma \circ f \circ w^{-1} \qquad w \in \sym(n), \quad \sigma \in \sym(r),
	\eneqn 
	and $\Psi$ commutes with the action of $\sym(n) \times \sym(r) $ in \cite[Section 2.2]{JLPdA17}, in which $\sym(r)$ acts on $OP_r(n)$ as place permutations.
	Thus the set of orbits of $[1,r]^n$ under the action of $\sym(r)$ is in bijection with the set of partitions of $n$ at most $r$-many parts.
	More precisely, we have
	\eq \label{eq:orbits}
	\hskip -2em
	(\ell_1,\ldots,\ell_n) \in [k_1,\ldots,k_n] \ \quad \text{if and only if} \quad (\ell_i=\ell_j \Leftrightarrow k_i= k_j \quad \text{for all} \ 1 \le i,  j\le n),
	\eneq
	where 
	$[k_1,\ldots,k_n]$ denote the orbit of $(k_1,\ldots,k_n)$ under the action of $\sym(r)$. 
\end{remark}

Let $s_p\,\, (1\le p\le n-1)$ be the simple transposition in the symmetric group $\sym(n)$ permuting $p$ and $p+1$.
\begin{lemma}  \label{Ti_symm_pol} 
	Let $a \in R[t_1,\ldots,t_n]$ and $1\le p\le n-1$.
	If $s_p.a=a$, 
	then  we have
	\eqn
	T_pa=aT_p.
	\eneqn
	
\end{lemma}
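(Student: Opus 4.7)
The plan is to reduce everything to the basis $\{b_{k_1,\ldots,k_n}\}$ from Lemma~\ref{orthonormality} and then apply Lemma~\ref{relationTiwithBasis} termwise. Write
\[
a=\sum_{\mathbf{k}\in [1,r]^n} c_{\mathbf{k}}\, b_{\mathbf{k}}, \qquad \mathbf{k}=(k_1,\ldots,k_n),\ c_{\mathbf{k}}\in R.
\]
Since $\sym(n)$ acts on the basis by permuting indices, namely $s_p.b_{\mathbf{k}}=b_{s_p.\mathbf{k}}$ where $s_p.\mathbf{k}$ swaps the entries $k_p$ and $k_{p+1}$, the hypothesis $s_p.a=a$ is equivalent (by linear independence) to
\[
c_{\mathbf{k}}=c_{s_p.\mathbf{k}} \qquad \text{for every } \mathbf{k}\in [1,r]^n.
\]

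Next, I apply Lemma~\ref{relationTiwithBasis} to each basis element $b_{\mathbf{k}}$ occurring in $a$. Rearranging that lemma gives
\[
T_p b_{\mathbf{k}} = b_{s_p.\mathbf{k}}\, T_p + (q-q^{-1})\bigl(\delta(k_p<k_{p+1})\, b_{\mathbf{k}} - \delta(k_p>k_{p+1})\, b_{s_p.\mathbf{k}}\bigr).
\]
Summing against the coefficients $c_{\mathbf{k}}$ and using $c_{\mathbf{k}}=c_{s_p.\mathbf{k}}$ to reindex the first piece (so that $\sum_{\mathbf{k}} c_{\mathbf{k}} b_{s_p.\mathbf{k}} T_p=\sum_{\mathbf{k}} c_{\mathbf{k}} b_{\mathbf{k}} T_p = a T_p$), I obtain
\[
T_p a - a T_p = (q-q^{-1})\sum_{\mathbf{k}} c_{\mathbf{k}}\bigl(\delta(k_p<k_{p+1})\, b_{\mathbf{k}} - \delta(k_p>k_{p+1})\, b_{s_p.\mathbf{k}}\bigr).
\]

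Finally, I pair each $\mathbf{k}$ with $s_p.\mathbf{k}$ in the remaining sum. If $k_p=k_{p+1}$ both delta terms vanish. If $k_p\ne k_{p+1}$, say $k_p<k_{p+1}$, then $\mathbf{k}$ contributes $c_{\mathbf{k}}b_{\mathbf{k}}$ while $s_p.\mathbf{k}$ contributes $-c_{s_p.\mathbf{k}}\, b_{s_p.(s_p.\mathbf{k})}=-c_{s_p.\mathbf{k}}\, b_{\mathbf{k}}$. By the symmetry of coefficients these cancel, so the entire sum is zero and $T_p a = a T_p$. There is no real obstacle here; the only point requiring any care is the reindexing that turns $\sum c_{\mathbf{k}} b_{s_p.\mathbf{k}} T_p$ into $a T_p$, which uses exactly the hypothesis $s_p.a=a$.
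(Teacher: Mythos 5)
Your proof is correct and follows essentially the same route as the paper's: expand $a$ in the idempotent basis $\{b_{\mathbf{k}}\}$, apply Lemma \ref{relationTiwithBasis} termwise, absorb $\sum c_{\mathbf{k}} b_{s_p.\mathbf{k}} T_p$ into $aT_p$ via the coefficient symmetry $c_{\mathbf{k}}=c_{s_p.\mathbf{k}}$, and cancel the remaining $\delta$-terms by pairing $\mathbf{k}$ with $s_p.\mathbf{k}$. No gaps.
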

\begin{proof}
	Write 
	$$a= \sum_{(k_1,\ldots,k_n)\in[1,r]^n} a_{(k_1,\ldots,k_n)} b_{k_1,\ldots,k_n}$$
	for some $a_{(k_1,\ldots,k_n)} \in R$.
	Since $s_p.a =a$, we have
	$$a_{(k_1,k_p, k_{p+1},\ldots,  k_n)} =a_{(k_1,\ldots,k_{p+1}, k_p,\ldots, k_n)} $$
	for all $(k_1,\ldots,k_n)\in[1,r]^n$.
	Then by Lemma \ref{relationTiwithBasis}, 
	we have 
	\eqn
	(q-q\inv)\inv (T_p a - s_p.a \, T_p )&&=\sum_{k_p<k_{p+1}} a_{(k_1,\ldots,k_n)} b_{k_1,\ldots,k_n}-\sum_{k_{p+1} < k_p}a_{(k_1,\ldots,k_n)} b_{s_p.(k_1,\ldots,k_n)} \\
	&&=\sum_{k_p<k_{p+1}} a_{(k_1,\ldots,k_n)} b_{k_1,\ldots,k_n}-\sum_{k_p < k_{p+1}}  a_{s_p.(k_1,\ldots,k_n)} b_{k_1,\ldots,k_n} =0,
	\eneqn
	as desired.
\end{proof}

\section{Yokonuma-Hecke like presentation}

For each $1\le i,j \le n$, define
\eq
B'_{i,j} :=-(q-q^{-1}) \sum_{k_i < k_j} b_{k_1,\ldots,k_n}.
\eneq
Note that
\eq\label{BiBi'}
-(t_{i}-t_{i+1} )B_{i,i+1}'= B_{i}\quad \text{for} \ 1\le i\le n-1.
\eneq
We also define for  each $1\le i\le n-1$ 
\eq
e_{i}:= \sum_{k_i = k_{i+1}} b_{k_1,\ldots,k_n}.
\eneq

\noindent
For each $1\le i\le n-1$ 
we define
\eq \label{eq:giTi}
g_i := T_i+B'_{i,i+1}.
\eneq

\begin{remark}\label{rem:def_gi}
	The above element $g_i$ is denoted by $S_{j+1}$ in \cite{Shoji} . It corresponds to $\bold G_i$  in \cite[Definition 5]{ERH}.
\end{remark}

\begin{example}
	Let $n=3$, $r=2$. Then the group $G(2,1,3)$ is the Weyl group of type $B_3$.  
	We have 
	$$\begin{cases}
		& b_{111}=\dfrac{t_1-u_2}{u_1-u_2}\cdot \dfrac{t_2-u_2}{u_1-u_2}\cdot \dfrac{t_3-u_2}{u_1-u_2},\qquad b_{121}=\dfrac{t_1-u_2}{u_1-u_2}\cdot\dfrac{t_2-u_1}{u_2-u_1}\cdot\dfrac{t_3-u_2}{u_1-u_2}, \\
		& b_{112}=\dfrac{t_1-u_2}{u_1-u_2}\cdot\dfrac{t_2-u_2}{u_1-u_2}\cdot\dfrac{t_3-u_1}{u_2-u_1},\qquad b_{122}=\dfrac{t_1-u_2}{u_1-u_2}\cdot \dfrac{t_2-u_1}{u_2-u_1}\cdot \dfrac{t_3-u_1}{u_2-u_1},\\
		& b_{211}=\dfrac{t_1-u_1}{u_2-u_1}\cdot \dfrac{t_2-u_2}{u_1-u_2}\cdot \dfrac{t_3-u_2}{u_1-u_2}, \qquad b_{221}=\dfrac{t_1-u_1}{u_2-u_1}\cdot \dfrac{t_2-u_1}{u_2-u_1}\cdot \dfrac{t_3-u_2}{u_1-u_2},\\
		& b_{212}=\dfrac{t_1-u_1}{u_2-u_1}\cdot \dfrac{t_2-u_2}{u_1-u_2}\cdot \dfrac{t_3-u_1}{u_2-u_1},\qquad b_{222}=\dfrac{t_1-u_1}{u_2-u_1}\cdot \dfrac{t_2-u_1}{u_2-u_1}\cdot \dfrac{t_3-u_1}{u_2-u_1},
	\end{cases}$$
	$$\begin{cases}
		& B'_{12}=-(q-q\inv)\{b_{121}+b_{122}\},\\
		& B'_{13}=-(q-q\inv)\{b_{112}+b_{122}\},\\
		& B'_{23}=-(q-q\inv)\{b_{112}+b_{212}\},
	\end{cases}$$
	and
	$$\begin{cases}
		& e_1=b_{111}+b_{112}+b_{221}+b_{222},\qquad e_2=b_{111}+b_{211}+b_{122}+b_{222},\\
		& g_1=T_1+B'_{12},\qquad g_2=T_2+B'_{23}.
	\end{cases}$$
	
\end{example}

\Prop\label{actiongi}
For $1\le i\le n$, $1\le j\le n-1$, we have
\eq g_j t_i=t_{s_j(i)} g_j.
\eneq
Hence for 
any $f \in R[t_1,\ldots,t_n], \ 1\le j \le n-1$,
we have
\eq
g_j f=(s_j.f) g_j. 
\eneq 
\end{prop} 
\begin{proof} We recall the following relations  from Definition \ref{def:Hnr}:
\eqn T_j t_k= t_k T_j \ (k\neq j,j+1), \qquad 
T_{j} t_{j+1}= t_{j} T_{j} -B_{j}, \quad \text{and} \quad
T_{j} t_{j}= t_{j+1} T_{j} +B_{j}.
\eneqn


For the case $i\neq j,j+1$, there is nothing to prove since $s_j(i)=i$. 

For $i=j+1$, by using the formula (\ref{BiBi'}), we get 
\eqn
&g_jt_{j+1}= (T_j+B'_{j,j+1})t_{j+1}=  t_{j} T_{j} -B_{j} +B'_{j,j+1}t_{j+1}=t_jT_j+(t_{j}-t_{j+1} )B_{j,j+1}'+ B'_{j,j+1}t_{j+1}\\
&= t_j(T_j+B_{j,j+1}')=t_jg_j=t_{s_j(i)}g_j
\eneqn

Similarly, for the case $i=j$, we obtain
\eqn
&g_jt_{j}= (T_j+B'_{j,j+1})t_{j}=  t_{j+1} T_{j}+B_{j} +B'_{j,j+1}t_{j}=t_{j+1}T_j-(t_{j}-t_{j+1} )B_{j,j+1}'+ B'_{j,j+1}t_{j}\\
&= t_{j+1}(T_j+B_{j,j+1}')=t_{s_j(i)}g_j,
\eneqn
as desired.
\end{proof}

\begin{lemma}\label{lem:relations}
We have
\eq \label{gkBij}
g_k B'_{i,j} =  B'_{s_k(i),s_k(j)} g_k,
\eneq
\eq\label{TkBij}
T_k B'_{i,j} = B'_{s_k(i),s_k(j)} T_k + (B'_{s_k(i),s_k(j)}- B'_{i,j})B'_{k,k+1}
\eneq
for $1\le k \le n-1$, $1\le i,j\le n$.
\end{lemma}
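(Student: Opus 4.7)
My plan is to derive both identities from Proposition \ref{actiongi} together with the explicit formulas for $B'_{i,j}$ in terms of the idempotents $b_{k_1,\ldots,k_n}$. The key observation is that every $B'_{i,j}$ lies in the commutative subalgebra $R[t_1,\ldots,t_n]$, so the only real computation is to identify the action of $s_k\in\sym(n)$ on these elements.

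For \eqref{gkBij}, since $B'_{i,j}=-(q-q^{-1})\sum_{k_i<k_j} b_{k_1,\ldots,k_n}\in R[t_1,\ldots,t_n]$, Proposition \ref{actiongi} gives $g_k B'_{i,j}=(s_k.B'_{i,j})\,g_k$. So it remains to check the identity $s_k.B'_{i,j}=B'_{s_k(i),s_k(j)}$ in $R[t_1,\ldots,t_n]$. Using $s_k.b_{(k_1,\ldots,k_n)}=b_{s_k.(k_1,\ldots,k_n)}$ from the formula just before Lemma \ref{Ti_symm_pol}, I would substitute $\ell_p=k_{s_k(p)}$ and observe that the indexing condition $k_i<k_j$ becomes exactly $\ell_{s_k(i)}<\ell_{s_k(j)}$. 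Re-summing over $(\ell_1,\ldots,\ell_n)$ produces $B'_{s_k(i),s_k(j)}$, as required.

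For \eqref{TkBij}, the strategy is simply to replace $T_k$ by $g_k-B'_{k,k+1}$ using \eqref{eq:giTi}. Writing
\[
T_k B'_{i,j}=g_k B'_{i,j}-B'_{k,k+1}B'_{i,j}
\]
and applying the first identity yields
\[
T_k B'_{i,j}=B'_{s_k(i),s_k(j)}(T_k+B'_{k,k+1})-B'_{k,k+1}B'_{i,j}.
\]
The desired form then follows from the commutativity of $B'_{s_k(i),s_k(j)}$ and $B'_{k,k+1}$ (both belong to the commutative algebra $R[t_1,\ldots,t_n]$), which lets me collect $B'_{k,k+1}$ on the right.

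No step here is really an obstacle: both identities reduce to bookkeeping once one recognizes that $B'_{i,j}\in R[t_1,\ldots,t_n]$ and that the $\sym(n)$-action permutes the basis $\{b_{k_1,\ldots,k_n}\}$ by place permutation. The only point that requires minor care is the index juggling in showing $s_k.B'_{i,j}=B'_{s_k(i),s_k(j)}$, which is most transparent via the reparametrization $\ell_p=k_{s_k(p)}$.
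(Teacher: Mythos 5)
Your proposal is correct and follows essentially the same route as the paper: the paper likewise verifies $w.B'_{i,j}=B'_{w(i),w(j)}$ by reindexing the sum over the idempotents $b_{k_1,\ldots,k_n}$, invokes Proposition \ref{actiongi} to get \eqref{gkBij}, and deduces \eqref{TkBij} ``immediately from the first one,'' which is precisely the substitution $T_k=g_k-B'_{k,k+1}$ together with commutativity in $R[t_1,\ldots,t_n]$ that you spell out.
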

\begin{proof}
Note that for any $1\le i,j, \le n$,  $w\in \sym(n)$, we have
\eq
w.B'_{i,j}
=(q-q^{-1}) \sum_{k_i < k_j} b_{ (k_{w^{-1}(1)},\ldots,k_{w^{-1}(n)})}
=(q-q^{-1}) \sum_{k_{w(i)} < k_{w(j)}} b_{ (k_1,\ldots,k_n)}
=B'_{w(i),w(j)}.
\eneq
Thus Proposition \ref{actiongi} implies \eqref{gkBij}.
The second formula results immediately from the first one.
\end{proof}

\begin{prop}\label{prop:quadratic}
For $1\le i\le n-1$, we have
\eq
g_i^2 = 1+(q-q\inv)e_ig_i.
\eneq
\end{prop}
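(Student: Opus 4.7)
The plan is to expand $g_i^2$ using $g_i = T_i + B'_{i,i+1}$ and then collect terms into the desired form. Abbreviate $\alpha := B'_{i,i+1}$ and $\beta := B'_{i+1,i}$. Expansion gives
\[
g_i^2 = T_i^2 + T_i\alpha + \alpha T_i + \alpha^2,
\]
and the quadratic relation \eqref{rel:quad} supplies $T_i^2 = 1 + (q-q^{-1})T_i$. Applying Lemma~\ref{lem:relations} with $k=i$ and $(i,j)=(i,i+1)$, noting that $s_i$ swaps $i$ and $i+1$, yields
\[
T_i\alpha = \beta T_i + (\beta - \alpha)\alpha.
\]

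Next I would exploit the orthogonality of the idempotents $b_{k_1,\ldots,k_n}$ (Lemma~\ref{orthonormality}) together with the completeness relation $\sum b_{k_1,\ldots,k_n} = 1$. Because the three index sets $\{k_i<k_{i+1}\}$, $\{k_i=k_{i+1}\}$, $\{k_i>k_{i+1}\}$ form a partition of $[1,r]^n$, one gets
\[
\alpha + \beta = -(q-q^{-1})(1 - e_i), \qquad \alpha\beta = \beta\alpha = 0, \qquad e_i\alpha = 0,
\]
and $\alpha^2 = -(q-q^{-1})\alpha$ (since $\bigl(\sum_{k_i<k_{i+1}} b\bigr)^2 = \sum_{k_i<k_{i+1}} b$). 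Hence $(\beta-\alpha)\alpha = -\alpha^2 = (q-q^{-1})\alpha$, so $T_i\alpha = \beta T_i + (q-q^{-1})\alpha$.

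Substituting everything back gives
\[
g_i^2 = 1 + (q-q^{-1})T_i + (\alpha+\beta)T_i + (q-q^{-1})\alpha + \alpha^2 = 1 + (q-q^{-1})T_i + (\alpha+\beta)T_i,
\]
since $(q-q^{-1})\alpha + \alpha^2 = 0$. Now $(\alpha+\beta)T_i = -(q-q^{-1})T_i + (q-q^{-1})e_i T_i$, which collapses the computation to
\[
g_i^2 = 1 + (q-q^{-1})e_i T_i.
\]
Finally, because $e_i\alpha = 0$, we have $e_i T_i = e_i(T_i + \alpha) = e_i g_i$, yielding the claimed identity.

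The computation is essentially bookkeeping, so the only real obstacle is correctly handling the commutation of $T_i$ with $\alpha$ via Lemma~\ref{lem:relations}; once that formula is in hand, the orthogonality relations among the $b$'s produce all the needed cancellations.
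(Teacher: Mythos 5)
Your proposal is correct and follows essentially the same route as the paper: expand $(T_i+B'_{i,i+1})^2$, use the quadratic relation for $T_i$ and Lemma~\ref{lem:relations} to commute $T_i$ past $B'_{i,i+1}$, then let the orthogonality and completeness of the idempotents $b_{k_1,\ldots,k_n}$ produce the cancellations, finishing with $e_iT_i=e_ig_i$. The only difference is cosmetic bookkeeping (you isolate identities like $\alpha^2=-(q-q^{-1})\alpha$ explicitly, while the paper carries the term $B'_{i+1,i}B'_{i,i+1}$ to the end and observes it vanishes).
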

\begin{proof}
We have
$$g_i^2=(T_i+B'_{i,i+1})^2=T_i^2+T_iB'_{i,i+1}+B'_{i,i+1}T_i+{B'_{i,i+1}}^2$$
$$=\left( (q-q\inv)T_i + 1 \right )+\left(B'_{i+1,i}T_i+(B'_{i+1,i}-B'_{i,i+1})B'_{i,i+1}\right)+B'_{i,i+1}T_i+{B'_{i,i+1}}^2,$$
by the quadratic relation on $T_i$ and  Lemma \ref{lem:relations}.
Then by the fact that $\sum b_{k_1\cdots k_n}=1$ and the definition of $B'_{i,i+1}$, it becomes
$$g_i^2 =(q-q\inv)\left(\sum b_{k_1\cdots k_n} - \sum_{k_{i+1}<k_i}b_{k_1\cdots k_n}  -\sum_{k_i<k_{i+1}} b_{k_1\cdots k_n} \right)T_i+1+ B'_{i+1,i}B'_{i,i+1}.$$
We know that $B'_{i+1,i}B'_{i,i+1}=0$, thus we obtain the desired result
$$g_i^2=(q-q\inv)e_i \, T_i +1=(q-q\inv)e_ig_i+1,$$
by using $e_iT_i=e_i(g_i-B'_{i,i+1})=e_ig_i$ because $e_iB'_{i,i+1}=0.$
\end{proof}

The below relations can be verified in the faithful representation of $\Hnr$ constructed by Shoji (see \cite[Lemma 3.7]{SakamotoShoji}).  We  include here a direct proof using the defining relations.
\begin{prop}\label{prop:braid}
The following relations hold. 
\be\label{eq:presentationproperties}
\item $g_ig_j=g_jg_i$ for $|i-j| >1$,
\item $g_ig_{i+1}g_i = g_{i+1}g_ig_{i+1}$ for $1\le i \le n-1$.
\ee
\end{prop}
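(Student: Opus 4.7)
The plan is to expand $g_k = T_k + B'_{k,k+1}$ and reduce both claims to identities built from (i) the Iwahori--Hecke relations among the $T_k$, (ii) the commutation formula \eqref{TkBij} from Lemma \ref{lem:relations}, and (iii) the fact that all $B'$-elements lie in the commutative subalgebra $R[t_1,\ldots,t_n]$.

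For (1), expand $g_ig_j = T_iT_j + T_iB'_{j,j+1} + B'_{i,i+1}T_j + B'_{i,i+1}B'_{j,j+1}$ and similarly for $g_jg_i$. The $T$-parts agree by the Iwahori--Hecke commutation, and the pure $B'$-parts agree since both factors lie in the commutative subalgebra. For the two cross terms, apply \eqref{TkBij}: because $|i-j|>1$, the transposition $s_i$ fixes $\{j,j+1\}$ pointwise, so $B'_{s_i(j),s_i(j+1)} = B'_{j,j+1}$ and the correction term vanishes, yielding $T_iB'_{j,j+1} = B'_{j,j+1}T_i$; the other cross term is symmetric.

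For (2), set $T=T_i$, $S=T_{i+1}$, $\alpha=B'_{i,i+1}$, $\beta=B'_{i+1,i+2}$, and fully expand $(T+\alpha)(S+\beta)(T+\alpha)$ and $(S+\beta)(T+\alpha)(S+\beta)$ into eight terms each. The leading $TST$ and $STS$ match by the braid relation in the Iwahori--Hecke subalgebra. For each remaining term, I would iteratively apply \eqref{TkBij} to move every $T_k$ past every polynomial factor, reducing each side to a normal form (polynomial factors on the left, a $T$-monomial on the right). The identities forcing the two normal forms to coincide are: orthogonality $B'_{a,b}B'_{b,a}=0$; the square relation $(B'_{a,b})^2 = -(q-q^{-1})B'_{a,b}$ (a consequence of $b_{\mathbf k}^2 = b_{\mathbf k}$); commutativity among $B'$-elements; and various ``triangle'' identities linking $B'_{i,i+1}, B'_{i+1,i+2}, B'_{i,i+2}$ and their transposes, obtained by decomposing $[1,r]^n$ according to the relative ordering of $(k_i,k_{i+1},k_{i+2})$.

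The main obstacle is the bookkeeping in (2): faithfully tracking the many correction terms produced by iterated application of \eqref{TkBij}. A useful guide is Proposition \ref{actiongi}: since $s_is_{i+1}s_i = s_{i+1}s_is_{i+1}$ in $\sym(n)$, both $g_ig_{i+1}g_i$ and $g_{i+1}g_ig_{i+1}$ satisfy the same commutation rule $x \cdot f = (s_is_{i+1}s_i).f \cdot x$ against $f \in R[t_1,\ldots,t_n]$, which strongly constrains any residual discrepancy after the Hecke braid relation is used. I expect the cleanest presentation is to organize the terms by degree in $T_iT_{i+1}$ and systematically collapse them using the identities above; the computation is finite but tedious.
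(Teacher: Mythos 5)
Your part (1) is exactly the paper's argument: the $T$-terms commute by the Hecke relations, the pure $B'$-terms commute because they live in $R[t_1,\ldots,t_n]$, and the cross terms commute because the correction in \eqref{TkBij} vanishes when $s_i$ fixes $\{j,j+1\}$ pointwise. For part (2) you have identified precisely the right ingredients --- the commutation formula \eqref{TkBij}, orthogonality $B'_{a,b}B'_{b,a}=0$, and a ``triangle'' identity obtained by splitting $[1,r]^n$ according to the relative order of $(k_i,k_{i+1},k_{i+2})$ (in the paper this is the computation showing $A=0$, namely $(q-q^{-1})B'_{i,i+2}+B'_{i,i+2}B'_{i+1,i+2}+B'_{i,i+2}B'_{i,i+1}=-B'_{i,i+1}B'_{i+1,i+2}$). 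The one organizational idea you are missing, and which makes the paper's computation manageable, is a preliminary reduction using \eqref{gkBij} rather than \eqref{TkBij}: since $g_ig_{i+1}B'_{i,i+1}=B'_{i+1,i+2}\,g_ig_{i+1}$, one has $g_ig_{i+1}g_i=g_ig_{i+1}T_i+B'_{i+1,i+2}\,g_ig_{i+1}$ and $g_{i+1}g_ig_{i+1}=T_{i+1}\,g_ig_{i+1}+B'_{i+1,i+2}\,g_ig_{i+1}$, so the braid relation is equivalent to the cleaner identity $g_ig_{i+1}T_i=T_{i+1}g_ig_{i+1}$. This cuts your proposed $8\times 8$-term comparison down to a four-term expansion on each side, and the residual bookkeeping is then handled exactly as you describe (with one extra use of Lemma \ref{Ti_symm_pol} to commute $T_{i+1}$ past the $s_{i+1}$-symmetric product $B'_{i,i+2}B'_{i,i+1}$). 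Your plan would succeed, but as written it stops at the point where the actual work begins; the normal-form computation is the substance of the proof, and without the reduction above it is considerably heavier than you may anticipate. The appeal to Proposition \ref{actiongi} is a useful sanity check but, as you note, only constrains the discrepancy rather than killing it.
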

\Proof
(1) If $|i-j| >1$,  we know that $T_iT_j=T_jT_i$. 
Then the difference $g_ig_j-g_jg_i$ becomes
$$g_ig_j-g_jg_i= T_iB'_{j,j+1}+B'_{i,i+1}T_j-T_jB'_{i,i+1}-B'_{j,j+1}T_i=0,$$
where the last equality follows from Lemma \ref{lem:relations}.
\medskip

(2) 
We have
$$g_ig_{i+1}g_i = g_ig_{i+1} (T_i+B'_{i,i+1})=g_ig_{i+1}T_i+B'_{i+1,i+2}g_ig_{i+1}$$
and 
$$g_{i+1}g_ig_{i+1}=(T_{i+1}+B'_{i+1,i+2})g_ig_{i+1}=T_{i+1}g_ig_{i+1}+B'_{i+1,i+2}g_ig_{i+1}.$$ 
Thus it is enough to show that
\eq \label{eq:braid}
g_ig_{i+1}T_i=T_{i+1}g_ig_{i+1}.
\eneq
By \eqref{rel:quad} and \eqref{TkBij}, we get
\eq \label{eq:ggT}
g_ig_{i+1}T_i=T_iT_{i+1}T_i+B'_{i,i+1}T_{i+1}T_i+\left((q-q\inv)B'_{i,i+2}+B'_{i,i+1}  B'_{i,i+2}  \right) T_i+  B'_{i,i+2} .
\eneq 
\smallskip
On the other hand we have
\eqn
T_{i+1}g_ig_{i+1} &&= T_{i+1}(T_i+B'_{i,i+1})(T_{i+1}+B'_{i+1,i+2})\\
&&=T_{i+1}T_iT_{i+1} + T_{i+1}B'_{i,i+1}T_{i+1} + T_{i+1}T_iB'_{i+1,i+2}+ T_{i+1}B'_{i,i+1}B'_{i+1,i+2}.
\eneqn
For the second term, we get
\eqn
T_{i+1}B'_{i,i+1}T_{i+1} = \left((q-q^{-1}) B'_{i,i+2} + (B'_{i,i+2}-B'_{i,i+1})B'_{i+1,i+2}  \right) T_{i+1} +B_{i,i+2}.
\eneqn
For the last two terms, we have
\eqn
&&T_{i+1}T_i B'_{i+1,i+2}+T_{i+1}B'_{i,i+1}B'_{i+1,i+2}\\
&&=
\left(T_{i+1} B'_{i,i+2}T_{i}+ T_{i+1}B'_{i,i+2}B'_{i,i+1}-T_{i+1}B'_{i+1,i+2}B'_{i,i+1} \right)+T_{i+1}B'_{i,i+1}B'_{i+1,i+2}\\
&&=T_{i+1} B'_{i,i+2}T_{i}+ T_{i+1}B'_{i,i+2}B'_{i,i+1}\\
&&=\left( B'_{i,i+1}T_{i+1} T_i + ((B'_{i,i+1}-B'_{i,i+2})B'_{i+1,i+2})T_i \right)+ T_{i+1}B'_{i,i+2}B'_{i,i+1}.
\eneqn
Since $s_{i+1}.(B'_{i,i+2}B'_{i,i+1})=B'_{i,i+2}B'_{i,i+1}$,  by Lemma \ref{Ti_symm_pol} we have
\eqn
T_{i+1}B'_{i,i+2}B'_{i,i+1}=B'_{i,i+2}B'_{i,i+1} T_{i+1}.
\eneqn
Summing up, we obtain
\eq \label{eq:Tgg}
T_{i+1}g_ig_{i+1}=  T_{i+1}T_iT_{i+1}+B'_{i,i+1}T_{i+1}T_i+A \, T_{i+1}+ C \, T_i+B'_{i,i+2},
\eneq
where
\eqn
&&A=(q-q\inv)B'_{i,i+2}-B'_{i,i+1}B'_{i+1,i+2}+B'_{i,i+2}B'_{i+1,i+2}+B'_{i,i+2}B'_{i,i+1} 
\quad \text{and},\\
&& C= (B'_{i,i+1} -B'_{i,i+2} )B'_{i+1,i+2}. 
\eneqn
Note that if $A=0$, then $C=(q-q\inv)B'_{i,i+2}+B'_{i,i+1}  B'_{i,i+2} $ and hence 
by comparing \eqref{eq:ggT} and \eqref{eq:Tgg}, 
we obtain $g_ig_{i+1}T_i=T_{i+1}g_ig_{i+1}$.

Since
\eqn 
&&(q-q\inv)B'_{i,i+2}+B'_{i,i+2}B'_{i+1,i+2}+B'_{i,i+2}B'_{i,i+1} \\
&&=(q-q\inv)^2(-\sum_{k_i<k_{i+2}}b_{k_1\cdots k_n}+\sum_{k_i<k_{i+2}, \, k_{i+1}<k_{i+2}}b_{k_1\cdots k_n}+\sum_{k_i<k_{i+2}, \, k_{i}<k_{i+1}}b_{k_1\cdots k_n})\\
&&=-(q-q\inv)^2\sum_{k_i<k_{i+2}, \, k_i<k_{i+1}, \, k_{i+1}<k_{i+2}}b_{k_1\cdots k_n} \\
&&=-(q-q\inv)^2 \sum_{k_i<k_{i+1}<k_{i+2}}b_{k_1\cdots k_n} 
=-B'_{i,i+1}B'_{i+1,i+2},
\eneqn
we obtain $A=0$,
as desired.
\QED

Hence for each $w\in \sym(n)$, the element
\eq
g_w:=g_{i_1} \cdots g_{i_l}
\eneq
is well-defined,   i.e. independent of the choice of the reduced expression of $w$, where $s_{i_1}\cdots s_{i_l} $ is a reduced expression of $w$.

The lemma below follows immediately from Proposition \ref{prop:quadratic}.
\begin{lemma}  \label{lem:gwgi}
For $w \in \sym(n)$, we have
$$g_wg_{s_i} =\begin{cases} g_{ws_i}\,\,\, \text{if}\,\,\,\ell(ws_i)> \ell(w) \\
	g_{ws_i}+(q-q\inv)g_w  e_i \,\,\, \text{if}\,\,\,\ell(ws_i)< \ell(w).
\end{cases}$$ and
$$g_{s_i}g_w =\begin{cases} g_{s_iw}\,\,\, \text{if}\,\,\,\ell(s_iw)> \ell(w) \\
	g_{s_iw}+(q-q\inv) e_i  g_w\,\,\, \text{if}\,\,\,\ell(s_iw)< \ell(w).
\end{cases}$$ 
\end{lemma}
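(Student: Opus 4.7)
The plan is to handle the two identities symmetrically and reduce both to the quadratic relation $g_i^2 = 1 + (q-q^{-1})e_i g_i$ of Proposition \ref{prop:quadratic}. For the first identity, the length-increasing case $\ell(ws_i) > \ell(w)$ is immediate: concatenating a reduced expression of $w$ with $s_i$ gives a reduced expression of $ws_i$, so well-definedness of $g_w$ (from the braid relations of Proposition \ref{prop:braid}) yields $g_w g_{s_i} = g_{ws_i}$.

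For the length-decreasing case $\ell(ws_i) < \ell(w)$, I would set $w' = ws_i$. Then $\ell(w's_i) = \ell(w) > \ell(w')$, so by the easy case already established we have $g_w = g_{w'} g_{s_i}$. Multiplying by $g_{s_i}$ on the right and applying the quadratic relation gives
\[
g_w g_{s_i} = g_{w'} g_i^2 = g_{w'} + (q-q^{-1}) g_{w'} e_i g_i.
\]
To match the stated formula $g_{ws_i} + (q-q^{-1}) g_w e_i$, I need to move $g_i$ past $e_i$ inside $g_{w'} e_i g_i$. This is precisely where the key observation enters: since $e_i = \sum_{k_i = k_{i+1}} b_{k_1,\ldots,k_n}$ is invariant under the place permutation by $s_i$ (swapping indices $i$ and $i{+}1$ preserves the condition $k_i = k_{i+1}$), Proposition \ref{actiongi} gives $g_i e_i = (s_i.e_i) g_i = e_i g_i$. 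Hence $g_{w'} e_i g_i = g_{w'} g_i e_i = g_w e_i$, which yields the claim.

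The second identity is proved analogously, now multiplying the quadratic relation on the left: for $\ell(s_iw) < \ell(w)$, set $w'' = s_i w$ so that $g_w = g_{s_i} g_{w''}$, and compute $g_{s_i} g_w = g_i^2 g_{w''} = g_{w''} + (q-q^{-1}) e_i g_i g_{w''} = g_{s_i w} + (q-q^{-1}) e_i g_w$.

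There is essentially no obstacle here beyond identifying the commutation $e_i g_i = g_i e_i$; everything else is formal manipulation of reduced expressions combined with Proposition \ref{prop:quadratic}. The only point requiring a moment's thought is the $s_i$-invariance of $e_i$, which is why Proposition \ref{actiongi} is indispensable for closing the argument.
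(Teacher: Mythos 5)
Your proof is correct and is essentially the argument the paper intends: the paper simply asserts that the lemma follows immediately from Proposition \ref{prop:quadratic}, and your write-up supplies exactly the missing details (reduction to the quadratic relation via $g_w = g_{ws_i}g_{s_i}$, together with the commutation $g_ie_i = e_ig_i$ coming from the $s_i$-invariance of $e_i$ and Proposition \ref{actiongi}). No discrepancies to report.
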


\begin{lemma}\label{gw}
For $w \in \sym(n)$, we have
$$  g_w \in T_w + \sum_{w'<w}  R [t_1,\ldots, t_n]T_{w'},$$
where 
$<$ denotes the Bruhat order on $\sym(n)$.
\end{lemma}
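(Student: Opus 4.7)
The plan is to prove the statement by induction on $\ell(w)$, with the base case $w = e$ giving $g_e = 1 = T_e$ trivially. For the inductive step, I would write $w = w' s_i$ with $\ell(w') = \ell(w)-1$, so that $g_w = g_{w'} g_i$ by the well-definedness established after Proposition \ref{prop:braid}. Applying the induction hypothesis gives $g_{w'} = T_{w'} + \sum_{v < w'} f_v T_v$ for some $f_v \in R[t_1,\ldots,t_n]$, and expanding $g_i = T_i + B'_{i,i+1}$ yields four types of terms to analyze.

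Before handling these, I would establish an auxiliary claim: for every $f \in R[t_1,\ldots,t_n]$ and every $u \in \sym(n)$,
\[
T_u f \;\in\; (u.f)\, T_u + \sum_{v < u} R[t_1,\ldots,t_n]\, T_v.
\]
This is proved by induction on $\ell(u)$. The basic move is that for a single simple generator, $T_k f - (s_k.f) T_k$ lies in $R[t_1,\ldots,t_n]$; this follows from the relations in Definition \ref{def:Hnr} together with the observation (used e.g.\ in Lemma \ref{lem:relations}) that the ``error'' on commuting a polynomial past $T_k$ is again a polynomial. In the inductive step, one writes $u = u' s_k$ with $\ell(u') < \ell(u)$, pushes $f$ past $T_k$ first to pick up the leading term $(u.f) T_u$, and handles the remaining products $T_{v'} T_k$ (for $v' < u'$) using the Iwahori-Hecke quadratic relation together with the Bruhat-order lifting property.

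With this sub-lemma in hand, the four terms in the expansion of $g_w = (T_{w'} + \sum_{v<w'} f_v T_v)(T_i + B'_{i,i+1})$ are handled as follows. The term $T_{w'} T_i$ equals $T_w$ because $\ell(w's_i) > \ell(w')$, supplying the required leading $T_w$. The term $T_{w'} B'_{i,i+1}$ is rewritten by the sub-lemma as $(w'.B'_{i,i+1}) T_{w'} + \sum_{u<w'} p_u T_u$, which sits in $\sum_{u \leq w'} R[t]T_u \subseteq \sum_{u < w} R[t]T_u$. For $f_v T_v T_i$ with $v < w'$, the Iwahori-Hecke relation gives $T_v T_i \in RT_{vs_i} + RT_v$, and the lifting property of Bruhat order (using $v < w' = ws_i$ and the length additivity $\ell(w)=\ell(w')+1$) ensures $vs_i, v < w$. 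Finally, $f_v T_v B'_{i,i+1}$ is reduced by the sub-lemma to terms $\leq v < w$.

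The main obstacle is purely combinatorial: confirming that every subscript that arises stays strictly below $w$ in the Bruhat order. This boils down to the standard lifting lemma for Coxeter groups applied to the case split $\ell(vs_i) \gtrless \ell(v)$ for $v < w'$, which is routine but must be done carefully to cover all four summands above. Everything else is algebraic manipulation driven by the relations already recorded in Lemma \ref{lem:relations} and Lemma \ref{lem:gwgi}.
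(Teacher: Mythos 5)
Your proposal is correct and follows essentially the same strategy as the paper's proof: induction on $\ell(w)$, commuting Hecke generators past polynomials in the $t_i$ via Lemma \ref{relationTiwithBasis}, and the Bruhat-order lifting property to keep all subscripts strictly below $w$. The only real difference is that you peel off a simple reflection on the right ($w = w's_i$, $g_w = g_{w'}g_i$), which forces you to prove the auxiliary claim $T_u f \in (u.f)T_u + \sum_{v<u} R[t_1,\ldots,t_n]T_v$ for arbitrary $u$ in order to handle $T_{w'}B'_{i,i+1}$, whereas the paper peels off on the left ($g_{s_iw} = g_ig_w = (T_i+B'_{i,i+1})(T_w+A)$), so the polynomial $B'_{i,i+1}$ never has to cross a general $T_{w'}$ and only the rank-one case of your sub-lemma is needed.
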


\Proof We prove by induction  on the length of $w$. 
Suppose that the claim holds for $w\in \sym(n)$ and  
assume that $\ell(s_iw) > \ell(w)$.
We will verify the claim for $s_iw$.
By the assumption, there is $A\in  \sum_{w'<w} R_1[t_1,\ldots, t_n]T_{w'}$ such that
$$g_{s_iw}=g_ig_w=g_i(T_w+A)=(T_i+B'_{i,i+1})(T_w+A)=T_{s_iw}+
B'_{i,i+1}T_w+B'_{i,i+1}A+T_iA.$$

Since $w< s_iw$, it is enough to show that 
$$T_iA \in \sum_{v<s_i w} R[t_1,\ldots, t_n]T_{v}.$$
Indeed we have
\eqn
T_i A &&\in \sum_{w' < w} T_i R[t_1,\ldots, t_n] T_{w'} \\
&&\subset \sum_{w' < w} R[t_1,\ldots, t_n] T_iT_{w'} + \sum_{w' < w}  R[t_1,\ldots, t_n] T_{w'} \\
&&\subset  
\sum_{w' < w} R[t_1,\ldots, t_n] T_{s_iw'} + \sum_{w' < w}  R[t_1,\ldots, t_n] T_{w'}
\eneqn
by Lemma \ref{relationTiwithBasis}  and Lemma \ref{lem:gwgi}.
Notice that if $\ell(s_iw) > \ell(w)$, then $w'<w$ implies $s_iw' < s_iw$, which completes the proof.
\QED

By Theorem \ref{thm:Tbasis} and Lemma \ref{gw}, we get
\Th \label{basisShojiHecke}
The algebra $\mathcal H_{n,r}$  has a $R$-basis given as follows:
$$B:= \set{t_1^{c_1}\cdots t_n^{c_n}g_w}{w \in \sym(n), \  0\le c_i \le r-1}.$$
\end{theorem}

\begin{corollary}
The set
\eq \label{eq:bkbasis}
\B:=\set{b_{k_1,\ldots,k_n}g_w}{(k_1,\ldots,k_n) \in [1,r]^n, \ w\in \sym(n)}
\eneq 
forms an $R$-basis of $\Hnr$.
\end{corollary}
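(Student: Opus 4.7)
The plan is to deduce this directly from Theorem \ref{basisShojiHecke} together with Lemma \ref{orthonormality} by a change-of-basis argument. First I would observe that Theorem \ref{basisShojiHecke} can be repackaged as the $R$-module decomposition
$$\Hnr \;=\; \bigoplus_{w \in \sym(n)} R[t_1,\ldots,t_n] \cdot g_w,$$
since for each fixed $w$, the elements $\{t_1^{c_1}\cdots t_n^{c_n}\,g_w : 0\le c_i \le r-1\}$ form a basis of the right-hand summand. In particular, for each $w \in \sym(n)$, right-multiplication by $g_w$ gives an injective $R$-linear map $R[t_1,\ldots,t_n] \to \Hnr$ with image $R[t_1,\ldots,t_n]\cdot g_w$; since it sends a basis of the source injectively into $\Hnr$, it is an isomorphism of $R$-modules onto its image.

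Next I would invoke Lemma \ref{orthonormality}, which tells us that $\{b_{k_1,\ldots,k_n} : (k_1,\ldots,k_n)\in [1,r]^n\}$ is itself an $R$-basis of $R[t_1,\ldots,t_n]$. Applying the isomorphism $f \mapsto f g_w$ established above, the image set $\{b_{k_1,\ldots,k_n}\,g_w : (k_1,\ldots,k_n)\in [1,r]^n\}$ is an $R$-basis of $R[t_1,\ldots,t_n]\cdot g_w$ for every $w \in \sym(n)$. Taking the direct sum over $w \in \sym(n)$ of these bases yields precisely the set $\B$ as an $R$-basis of $\Hnr$.

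There is essentially no obstacle here: the only nontrivial input is the linear independence of $\{t_1^{c_1}\cdots t_n^{c_n}\,g_w\}$ from Theorem \ref{basisShojiHecke}, which already encodes the direct-sum decomposition. Everything else is a routine transfer of bases via the $R$-module isomorphism induced by right-multiplication by $g_w$. One may also remark that the cardinality count is consistent, since $|\B| = r^n\,n! = |G(r,1,n)|$ matches the rank of $\Hnr$ as a free $R$-module.
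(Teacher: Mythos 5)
Your proof is correct and follows essentially the same route as the paper, which states this corollary without proof precisely because it is the immediate change-of-basis consequence of Theorem \ref{basisShojiHecke} and Lemma \ref{orthonormality} that you spell out. The decomposition $\Hnr = \bigoplus_{w} R[t_1,\ldots,t_n]\, g_w$ and the transfer of the basis $\{b_{k_1,\ldots,k_n}\}$ along the injection $f \mapsto f g_w$ is exactly the intended argument.
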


The following is  one of the main theorems of this paper.

\Th \label{thm:main}
The $R$-algebra $\mathcal H_{n,r}$  has the following presentation.

Generators: $t_1,\ldots, t_n$ and $g_1,\ldots,g_{n-1}$

Relations:
\be\label{eq:presentationproperties}
\item $(t_i-u_1)\cdots(t_i-u_r)=0$
\item $t_it_j = t_jt_i$
\item $g_jt_i=t_{s_j(i)}g_j$
\item $g_ig_j=g_jg_i$ if $|i-j| >1$
\item $g_ig_{i+1}g_i = g_{i+1}g_ig_{i+1}$
\item $g_i^2 = 1+(q-q\inv)e_ig_i$,
\ee
where $e_i:=\sum_{k_i=k_{i+1}} b_{k_1,\ldots,k_n}$ and $b_{k_1,\ldots,k_n}:=\displaystyle\prod_{1\le i\le n} \prod_{1\le j \le r, \, j \neq k_i} \dfrac{t_i-u_j}{u_{k_i}-u_j}$.
\end{theorem}
\Proof

Since $g_i=T_i+B'_{i,i+1}$, the set  $\{t_1,\ldots,t_n, g_1,\ldots,g_{n-1} \}$ generates $\Hnr$.
The relations  (3)-(6) were shown through this section.  

Now let $\Hnr'$ be the $R$-algebra defined by  the presentation above. 
Then there is a surjective $R$-algebra homomorphism $\psi:\Hnr' \to \Hnr$,
which maps the generators of $\Hnr'$ to the elements of $\Hnr$ represented by the same symbols.
Let $B'$ be the subset of $\Hnr'$ analogous to $B$ in Theorem \ref{basisShojiHecke}.
By the relations, any elements in $\Hnr'$ can be written as an $R$-linear combination of elements in $B'$.
Moreover $\psi$ maps  $B'$ to $B$, which is linearly independent over $R$. 
Hence  $B'$ is $R$-basis of $\Hnr'$ and $\psi$ is an isomorphism, as desired. 
\QED

Note that $e_i$ commutes with $g_i$.
Hence there exists an $R$-algebra anti-involution 
of $\Hnr$ sending $g_j \to g_j$ $(1\le j\le n)$ and  $t_i\to t_i$  $(1 \le i \le n)$.


\begin{remark} \label{rem:YH presentation}
If we take $R=\C[q, q\inv]$ and $u_k:=e^{\frac{2\pi \sqrt{-1} k}{r}}$ $(1\le k \le r)$, then the algebra \emph{defined by} the above presentation is called the
Yokonuma-Hecke algebra of type $A$ (see \cite{Juyumaya},  \cite{CLPdA}). 
Note that under this choice of base ring and parameters,   we have
\eqn
e_i=\dfrac{1}{r} \sum_{s=0}^{r-1} t_i^s t_i^{r-s}.
\eneqn
In  \cite[Theorem 13]{ERH}, it is shown that
the algebra $\mathcal H_{n,r}(\C[q, q\inv], q, e^{\frac{2\pi \sqrt{-1} k}{r}} \, (1\le k \le r))$ is isomorphic to the Yokonuma-Hecke algebra, which can be understood as a special case of  Theorem \ref{thm:main}.
Note that in this case one can obtain an isomorphism between these two algebras by comparing the results of Lusztig in \cite{Lusztig} and Jacon-Poulain d'Andecy in \cite{JLPdA} on the structure of Yokonuma-Hecke algebras and the ones of Sawada-Shoji in \cite{SawadaShoji} and Hu-Stoll in \cite{HuStoll} on $\Hnr$,  as explained in \cite[Section 3.1]{ERH}.
\end{remark}

\begin{remark}
It is known that (see, \cite[(8.3.2)]{SawadaShoji}) if 
\eq \label{eq:separation}
\text{(separation condition) } \qquad q^{2k} u_i-u_j \in R^\times  \quad \text{for } \  -n < k < n, \, i\neq j,
\eneq
then the algebra $\Hnr$ is isomorphic to the Ariki-Koike algebra associated with the same parameter
(see    \cite[(8.3.2)]{SawadaShoji},  and   see \cite[Section 1.1]{SawadaShoji} for the definition of Ariki-Koike algebras over $R$ associated with $(q,u_1,\ldots,u_r)$).
Hence the theorem above also provides a new presentation of the Ariki-Koike algebra with the   condition \eqref{eq:separation}   on the parameters,  which includes the generic Ariki-Koike algebra. 

\end{remark}

\begin{theorem} \label{thm:presentation2}
There is  a presentation of $\Hnr$ 
with the generators 
$$g_1,\ldots, g_{n-1} \qquad \text{and} \quad b_{k_1,\ldots,k_n} \quad \text{for} \ (k_1,\ldots,k_n) \in [1,r]^n$$
subject to the relations
\begin{enumerate}
\item $g_ig_j=g_jg_i$ if $|i-j| >1$,
\item $g_ig_{i+1}g_i = g_{i+1}g_ig_{i+1}$,
\item $g_i^2 = 1+(q-q\inv)e_ig_i$,
\item $b_{k_1,\ldots,k_n} b_{k'_1,\ldots,k'_n}  = \delta_{(k_1,\ldots,k_n), (k'_1,\ldots,k'_n)}  b_{k_1,\ldots,k_n}$,
\item $g_i b_{k_1,\ldots,k_n} = b_{s_i.(k_1,\ldots, k_n)}  g_i$
\item $\sum b_{k_1,\ldots,k_n}=1$,
\end{enumerate}
where $e_i:=\sum_{k_i=k_{i+1}} b_{k_1,\ldots,k_n}$.
\end{theorem}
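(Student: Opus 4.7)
The plan is to exhibit $\Hnr$ as a quotient of the abstract $R$-algebra $\Hnr''$ defined by the given presentation, and then to match ranks using the basis $\B$ from \eqref{eq:bkbasis}. First I would verify that the six relations hold inside $\Hnr$: (1) and (2) are Proposition \ref{prop:braid}, (3) is Proposition \ref{prop:quadratic}, (4) and (6) are the orthogonality and completeness statements of Lemma \ref{orthonormality}, and (5) is the specialization of Proposition \ref{actiongi} to $f = b_{k_1,\ldots,k_n}$, combined with the observation that the $\sym(n)$-action on this basis is by place permutation of the indices. This produces a surjective $R$-algebra homomorphism $\psi \colon \Hnr'' \twoheadrightarrow \Hnr$, since $T_i$ is recovered as $g_i - B'_{i,i+1}$ with $B'_{i,i+1}$ an $R$-linear combination of the $b$'s, while $t_j = \sum_{(k_1,\ldots,k_n)} u_{k_j}\, b_{k_1,\ldots,k_n}$ by Lemma \ref{orthonormality}.

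Next I would set $\B'' := \{b_{k_1,\ldots,k_n} g_w : (k_1,\ldots,k_n) \in [1,r]^n,\ w \in \sym(n)\} \subset \Hnr''$, where $g_w$ is unambiguously defined thanks to (1)--(2), and I would prove $\B''$ spans $\Hnr''$ over $R$. The reduction of an arbitrary monomial in the generators to an $R$-linear combination of elements of $\B''$ proceeds in two stages. First, using (5) every $g_i$ can be moved past any $b$ at the cost of permuting the $b$-indices, so any monomial can be rewritten as $b_{k_1,\ldots,k_n} \cdot g_{i_1} \cdots g_{i_l}$ after using (4) and (6) to collapse the resulting product of $b$'s into a single $b$ or zero. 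Second, I would show by induction on $l$ that every word in the $g_i$'s lies in the $A$-span of $\{g_w : w \in \sym(n)\}$, where $A$ denotes the commutative subalgebra generated by the $b$'s: if the expression $s_{i_1} \cdots s_{i_l}$ is not reduced, a sequence of braid moves from (1)--(2) creates a subword $g_i g_i$, to which relation (3) applies, replacing it by $1 + (q-q^{-1}) e_i g_i$ and hence shortening the word at the cost of an additional factor $e_i \in A$.

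Finally, since $\psi$ sends $\B''$ to $\B$, which is an $R$-basis of $\Hnr$, the set $\B''$ must be $R$-linearly independent inside $\Hnr''$. Together with the spanning statement this makes $\B''$ a basis of $\Hnr''$ and forces $\psi$ to be an isomorphism. The main obstacle is the inductive reduction of $g$-words in the second stage: because $e_i$ does not in general commute with all the $g_j$, one must carefully track how its $b$-indices are permuted when pushed to the left via (5), and check that this bookkeeping does not obstruct termination of the induction on $l$, along the same pattern as in the standard proof for Iwahori--Hecke algebras.
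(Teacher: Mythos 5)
Your proposal follows exactly the paper's argument: verify the six relations in $\Hnr$ using the earlier results, obtain a surjection from the abstractly presented algebra, show the analogue $\B''$ of the basis $\B$ spans, and conclude linear independence (hence isomorphism) from the fact that $\psi$ maps $\B''$ onto the known basis $\B$. The paper states the spanning step more tersely than you do, but the extra care you take with the reduction of $g$-words is a correct elaboration of the same idea, not a different route.
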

\begin{proof}
Let $\Hnr'$ be the $R$-algebra defined by the above presentation. Then there is a surjective $R$-algebra homomorphism $\phi:\Hnr'\to \Hnr$ 
assigning $g_i \mapsto g_i$ and $b_{k_1,\ldots,k_n} \mapsto b_{k_1,\ldots,k_n} $.
Since 
\eq \label{eq:ti}
t_i=\sum u_{k_i} b_{k_1,\ldots,k_n} \qquad (1\le i\le n)\quad \text{in} \ \Hnr,
\eneq
the homomorphism $\phi$ is surjective.

Let  $\B'$ be the subset of $\Hnr'$ analogous to $\B$,  the basis  in  \eqref{eq:bkbasis}.
Then by the relations (1)--(6), any element in $\Hnr'$ can be written as an $R$-linear combination of $\B'$. 
In particular,  the relation (3) together with (6) implies that $g_i^2$ can be written as an $R$-linear   combination of $\B'$.  
Moreover $\phi$ maps $\B'$ to $\B$, which is linearly independent over $R$. Hence $\B'$ is a basis of $\Hnr'$ and $\phi$ is an isomorphism, as desired.
\end{proof}

Note that in the presentation of the theorem above,  the parameters $u_1,\ldots,u_r$ do not show up. Hence we have
\begin{corollary} 
\label{cor:para_indep}
Let $R$ be an integral domain and $q\in R^\times$.
Assume that 
$(u_1,\ldots,u_r )$ and $(\tilde u_1,\ldots,\tilde  u_r)$ be $r$-tuples of elements in $R$ such that
\eq
\Delta=\prod_{i>j} (u_i-u_j) \in  R^\times \quad \text{and} \quad  \tilde \Delta:=\prod_{i>j} (\tilde  u_i-\tilde  u_j)  \in  R^\times.
\eneq
Then 
there is an  isomorphism
\eqn 
\Hnr(R,q,\tilde  u_1,\ldots,\tilde  u_r)  \isoto \Hnr(R,q,u_1,\ldots,u_r  )
\eneqn
of $R$-algebras which assigns 
\eqn
\tilde T_j \mapsto T_j,   \quad \text{and} \quad  \tilde t_i \mapsto \sum_{(k_1,\ldots,k_n)\in [1,r]^n} \tilde  u_{k_i} b_{k_1,\ldots, k_n}
=\sum_{j=0}^{r-1} a_j t_i^j,
\eneqn
where $\tilde T_j$, $\tilde t_i$ denote the generators of $\Hnr(R,q,\tilde  u_1,\ldots,\tilde  u_r)$ of the presentation  in 
Definition \ref{def:Hnr}, and $a_0,\ldots, a_{r-1} \in R$ are given by the equation
\eqn
\begin{pmatrix}
1 & u_1 & u_1^2 & \cdots & u_1^{r-1} \\ 
1 & u_2 & u_2^2 & \cdots & u_2^{r-1} \\
\vdots &\vdots  & \vdots & \ddots & \vdots \\
1 & u_r & u_r^2 & \cdots & u_r^{r-1}
\end{pmatrix}
\begin{pmatrix}
a_0 \\
a_1 \\
\vdots \\
a_{r-1} 
\end{pmatrix}
=
\begin{pmatrix}
\tilde  u_1 \\
\tilde  u_2 \\
\vdots \\
\tilde  u_{r}
\end{pmatrix}.
\eneqn
\end{corollary}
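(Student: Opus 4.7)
The plan is to invoke Theorem \ref{thm:presentation2} for both algebras simultaneously. The defining relations (1)--(6) of that presentation make no reference to the parameters $u_1,\ldots,u_r$, since the numerical values $u_k$ appear only through the formal symbols $b_{k_1,\ldots,k_n}$. Thus the presentation written for $\Hnr(R,q,u_1,\ldots,u_r)$ and the one written for $\Hnr(R,q,\tilde u_1,\ldots,\tilde u_r)$ coincide word-for-word. By the universal property of presentations, matching the generators $\tilde g_i\mapsto g_i$ and $\tilde b_{k_1,\ldots,k_n}\mapsto b_{k_1,\ldots,k_n}$ defines a well-defined $R$-algebra homomorphism in each direction, and the two compositions act as the identity on generators, so this yields the desired isomorphism $\Phi\colon \Hnr(R,q,\tilde u_1,\ldots,\tilde u_r)\isoto \Hnr(R,q,u_1,\ldots,u_r)$.

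Next I would verify the stated image of $\tilde T_j$. In $\Hnr(R,q,\tilde u_1,\ldots,\tilde u_r)$, definition \eqref{eq:giTi} gives $\tilde g_j=\tilde T_j+\tilde B'_{j,j+1}$, where $\tilde B'_{j,j+1}=-(q-q^{-1})\sum_{k_j<k_{j+1}}\tilde b_{k_1,\ldots,k_n}$. Crucially, the coefficients in the expansion of $B'_{j,j+1}$ into the $b$-basis depend only on the ordering of the indices $k_j,k_{j+1}$, not on the numerical values of the parameters. Hence $\Phi(\tilde B'_{j,j+1})=B'_{j,j+1}$, and since $\Phi(\tilde g_j)=g_j=T_j+B'_{j,j+1}$, we conclude $\Phi(\tilde T_j)=T_j$.

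For the image of $\tilde t_i$, apply Lemma \ref{orthonormality} inside $\Hnr(R,q,\tilde u_1,\ldots,\tilde u_r)$ to the polynomial $X_i$: this gives the identity $\tilde t_i=\sum_{(k_1,\ldots,k_n)\in[1,r]^n}\tilde u_{k_i}\,\tilde b_{k_1,\ldots,k_n}$. Applying $\Phi$ yields $\Phi(\tilde t_i)=\sum \tilde u_{k_i}\,b_{k_1,\ldots,k_n}$, which is the first expression claimed. To rewrite it as a polynomial $\sum_{j=0}^{r-1}a_j t_i^j$ in $t_i$, I would again use \eqref{actionti}: for any $f(X)\in R[X]$ of degree $\le r-1$,
\[
f(t_i)=\sum_{(k_1,\ldots,k_n)\in[1,r]^n} f(u_{k_i})\,b_{k_1,\ldots,k_n}.
\]
Thus matching coefficients amounts to finding $a_0,\ldots,a_{r-1}\in R$ with $\sum_{j=0}^{r-1}a_j u_k^{\,j}=\tilde u_k$ for each $k=1,\ldots,r$; this is exactly the linear system displayed in the corollary. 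Because the Vandermonde determinant of $u_1,\ldots,u_r$ is (up to sign) $\Delta\in R^\times$, the system admits a unique solution in $R$, giving the second expression.

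The only conceptual step is recognising that Theorem \ref{thm:presentation2} is genuinely parameter-free; once that is granted, the isomorphism is immediate and the remaining verifications are bookkeeping via Lemma \ref{orthonormality} and the definition of $g_j$. No substantive obstacle is expected.
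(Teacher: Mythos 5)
Your proposal is correct and follows essentially the same route as the paper: both obtain the isomorphism from the parameter-free presentation of Theorem \ref{thm:presentation2}, identify the image of $\tilde T_j$ via $g_j=T_j+B'_{j,j+1}$, and rewrite $\sum \tilde u_{k_i}b_{k_1,\ldots,k_n}$ as $\sum_j a_j t_i^j$ using the idempotent relation \eqref{actionti} and the Vandermonde system. Your added remark that invertibility of $\Delta$ guarantees the $a_j$ exist and are unique is a small but welcome piece of bookkeeping the paper leaves implicit.
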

We remark here that the above matrix is the transpose of the Vandermonde matrix $V(u_1,\ldots,u_r)$ in \eqref{eq:Vandermonde}.
\begin{proof}
By Theorem \ref{thm:presentation2},  there is an $R$-algebra isomorphism from
$\Hnr(R,q,\tilde  u_1,\ldots,\tilde  u_r)$ to $\Hnr(R,q,u_1,\ldots,u_r)$ which matches the generators in the presentation in Theorem \ref{thm:presentation2}.   By \eqref{eq:giTi} and \eqref{eq:ti},  the image of $\tilde t_i$ is the same with $\sum \tilde  u_{k_i} b_{k_1,\ldots, k_n}$. 
Since
\eqn
\left(\sum_{k=0}^{r-1} a_j t_i^j \right) b_{k_1,\ldots, k_n} = \left(\sum_{j=0}^{r-1} a_j u_{k_i}^j   \right) b_{k_1,\ldots, k_n} = \tilde  u_{k_i} b_{k_1,\ldots, k_n} \quad \text{for any} \quad (k_1,\ldots k_n) \in [1,r]^n,
\eneqn
we have 
$\sum \tilde  u_{k_i} b_{k_1,\ldots, k_n}
=\sum_{j=0}^{r-1} a_j t_i^j$, as desired.
\end{proof}

\begin{remark}
 The independence on the parameters $u_1,\ldots,u_r$ of the modified Ariki-Koike algebra follows from the structure theorem of $\Hnr$ in \cite{SawadaShoji, HuStoll}. See, in particular, the discussion at the end  of \cite{HuStoll} for the general choice of the base ring $R$.  We emphasize that our isomorphism is more explicit and it is done directly by using only the terms of the standard generators of $\Hnr$.
\end{remark}

\section{Symmetrizing trace form}
Define an $R$-linear map $\tau : \mathcal H_{n,r} \longrightarrow R$ by
\eq
\tau(t_1^{c_1}\cdots t_n^{c_n}g_w)=\begin{cases}
1 & \text{if}\ \  w=\id_{\sym(n)}, c_1=\cdots=c_n=0
\\
0 & \text{ otherwise }
\end{cases}
\eneq
for $0\le c_1,\ldots, c_r \le r-1$, $w \in \sym(n)$.
Note that for any polynomial $f(X)\in R[X]$ in one variable and any $1\le i\le n$, we have
\eq \label{eq:indep_i}
\tau(f(t_i))=\tau(\tilde f(t_i))=a,
\eneq
where $\tilde f(X)$ is the polynomial with degree $<r $ congruent to $f(X)$ modulo $(X-u_1)\cdots (X-u_n)$, and    $a$ is the constant term of $\tilde f(X)$.
Note also that if $f_1(X),\ldots,f_n(X) \in R[X]$, then 
\eq \label{eq:product}
\tau(f_1(t_1)\cdots f_n(t_n)) =  \tau(\tilde f_1(t_1)\cdots \tilde f_n(t_n)) =a_1\cdots a_n 
=\tau(f_1(t_1))\cdots \tau( f_n(t_n)),
\eneq
where $a_i$ denotes the constant term of $\tilde f_i(X)$.

The proof of the below lemma is identical to that of \cite[Proposition 10]{CLPdA}), but we include it here for reader's convenience.
\Lemma {\rm (\cite[Proposition 10]{CLPdA})}
Let $w,w' \in \sym(n)$.
Then
\[
\tau(g_w g_{w'}) = \delta_{w',w^{-1}}.
\]
It is equivalent to saying that
for any $A \in R[t_1,\ldots,t_n]$,
\[ \tau(A g_w g_{w'}) =
\begin{cases}
\tau(A) & \text{if} \ w^{-1}= w' \\
0 & \text{if} \ w^{-1}\neq w'
\end{cases}
\]

\enlemma
\begin{proof}
Since $\{t_1^{c_1}\cdots t_n^{c_n} g_w \}$ is an $R$-basis of $\Hnr$, the second statement follows from the first, by the definition of $\tau$.

When $w=\id$, there is nothing to prove.
Assume that $\ell(w) \ge 1$. Then there is $1\le i \le n-1$ such that $\ell(ws_i) < \ell(w)$.
Then
\[
g_w= g_{(ws_i)s_i } = g_{ws_i} g_{s_i}.
\]

(Case 1)  $\ell(s_iw') > \ell(w')$: We have
\[
g_{s_i}g_{w'}= g_{s_iw'}
\]
On the other hand,  $\ell(ws_i) < \ell(w)$ implies that $\ell(s_iw^{-1}) < \ell(w^{-1})$ so that $w'\neq w^{-1}$ and hence $s_iw^{-1}=(ws_i)^{-1} \neq s_iw'$.
By induction on $\ell(w)$, we have
\[
\tau(g_w g_{w'})=\tau(g_{ws_i} g_{s_i} g_{w'})=\tau(g_{ws_i} g_{s_iw'})=0.
\]
(Case 2)
$\ell(s_iw') < \ell(w')$: We have
\[
g_{s_i}g_{w'}= g_{s_iw'}+(q-q^{-1}) e_ig_{w'}
\]
and hence
\[
\tau(g_wg_w') = \tau( g_{ws_i} g_{s_iw'}+(q-q^{-1}) g_{ws_i}  e_ig_{w'})
=\tau( g_{ws_i} g_{s_iw'}) + (q-q^{-1})\tau( g_{ws_i} e_ig_{w'})
\]
On the other hand, $\ell(s_iw') < \ell(w')$ implies that $\ell((w')^{-1}s_i) < \ell((w')^{-1})$ so that
$(w')^{-1} \neq w {s_i}$.
Since $g_{ws_i} e_i = A g_{ws_i}$ for some $A \in R_1[t_1,\ldots, t_n]$, we have
\[
\tau(g_{ws_i} e_ig_{w'}) = \tau(A g_{ws_i} g_{w'}) =0
\]
by the induction hypothesis.

If $w^{-1}\neq w'$, then $(ws_i)^{-1} \neq s_iw'$, and by induction,
\[
\tau(g_{ws_i}g_{s_iw'})=0
\]

If $w^{-1}= w'$, then $(ws_i)^{-1} = s_iw'$, and by induction,
\[
\tau(g_{ws_i}g_{s_iw'})=1,
\]
as desired.
\end{proof}

Recall that an $R$-linear map $f: \mathcal A \to R$ from an $R$-algebra $\mathcal A$ to its base ring $R$ is called a \emph{trace form} if
\eqn
f(xy)=f(yx) \quad \text{for} \ x,y \in \mathcal A.
\eneqn
A trace form $f$ is called \emph{symmetrizing} if the bilinear form 
\eqn
\mathcal A \times \mathcal A \to R \qquad  \text{given by} \quad (x,y) \mapsto f(xy) 
\eneqn
is non-degenerate.  

\Cor
The map $\tau$ is a trace form on $\Hnr$. 
\encor
\begin{proof} 
Because
$\set{t_1^{c_1}\cdots t_n^{c_n}g_w}{0\le c_i\le r-1, \, w \in  \sym(n)}$ and $\set{g_w t_1^{c_1}\cdots t_n^{c_n}}{0\le c_i\le r-1, \, w \in  \sym(n)}$ are $R$-basis of $\Hnr$,
it is enough to show that
\[
\tau(t_1^{c_1}\cdots t_n^{c_n}g_wg_{w'}t_1^{d_1}\cdots \ t_n^{d_n})
=\tau(g_{w'}t_1^{d_1}\cdots \ t_n^{d_n}t_1^{c_1}\cdots t_n^{c_n}g_w)
\]
for all $w,w' \in \sym(n)$ and $0\le c_i,d_i\le r-1$.
We have
\[
\tau(t_1^{c_1}\cdots t_n^{c_n}g_wg_{w'}t_1^{d_1}\cdots \ t_n^{d_n})
= \delta_{w',w\inv} \tau(t_1^{c_1+d_1}\cdots t_n^{c_n+d_n})
\]
and
\[
\tau(g_{w'}t_1^{d_1}\cdots \ t_n^{d_n}t_1^{c_1}\cdots t_n^{c_n}g_w)
=\tau(t_{w'(1)}^{c_1+d_1}\cdots \ t_{w'(n)}^{c_n+d_n}g_{w'}g_w)
= \delta_{w',w\inv} \tau(t_{w'(1)}^{d_1+c_1}\cdots \ t_{w'(n)}^{c_n+d_n})
\]
Hence it amounts to show that
\[\tau(t_1^{p_1}\cdots t_n^{p_n})=\tau(t_{v(1)}^{p_1}\cdots t_{v(n)}^{p_n})
\]
for any $0\le p_i \le 2r-2$ and  any $v\in \sym(n)$.
Indeed, it follows from
\eqref{eq:indep_i} and \eqref{eq:product}.
\end{proof}

\begin{remark}
The trace form $\tau$ in the case of the Yokonuma-Hecke algebra appeared in \cite{CLPdA}.  It is known to be the same with the trace form obtained from an isomorphism between Yokonuma-Hecke algebra and a matrix algebra over Hecke algebras of symmetric groups  (\cite{JLPdA}).
\end{remark}

\begin{lemma}  \label{lem:tau_on_ti}
For $0\le s \le r-1$ and $\le i\le n$,
we have
\eq
\tau(t_i^{r+s})=\tau(t_i^r) h_s = (-1)^{r+1} (u_1\cdots u_r) h_s = (-1)^{r+1} \sigma_r h_s
\eneq
where $h_s=h_s(u_1,\ldots,u_r)$ denotes the $s$-th complete homogeneous symmetric polynomial in $u_1,\ldots, u_r$.
\end{lemma}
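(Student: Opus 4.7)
The plan is to use the relation \eqref{eq:t^r}, namely $t_i^r=\sum_{j=1}^{r}(-1)^{j+1}\sigma_j\,t_i^{r-j}$, to convert the assertion into a linear recurrence for the sequence $P_m:=\tau(t_i^m)$, and then to match it against the Newton-type recurrence satisfied by the complete homogeneous symmetric polynomials $h_s=h_s(u_1,\ldots,u_r)$. Observe first that, restricted to $R[t_i]$, the map $\tau$ extracts the constant term of the reduction of a polynomial in $t_i$ modulo $\prod_{j=1}^{r}(X-u_j)$; in particular, $P_m=\delta_{m,0}$ for $0\le m\le r-1$.

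Next I would multiply \eqref{eq:t^r} by $t_i^{s}$ and apply $\tau$ to obtain
\[
P_{r+s}=\sum_{j=1}^{r}(-1)^{j+1}\sigma_j\,P_{r+s-j}
\qquad(s\ge 0).
\]
For $s=0$ every summand with $1\le j\le r-1$ vanishes, so the recurrence immediately gives $P_r=(-1)^{r+1}\sigma_r=(-1)^{r+1}\sigma_r h_0$, which already settles the chain of equalities in the case $s=0$. For $1\le s\le r-1$ the terms with $j>s$ lie in the range $1\le r+s-j\le r-1$, so they all vanish by the initial conditions, and the recurrence simplifies to
\[
P_{r+s}=\sum_{j=1}^{s}(-1)^{j+1}\sigma_j\,P_{r+s-j}.
\]

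I would finish by induction on $s$. Assuming $P_{r+t}=(-1)^{r+1}\sigma_r h_t$ for all $0\le t<s$, substitute into the simplified recurrence to obtain
\[
P_{r+s}=(-1)^{r+1}\sigma_r\sum_{j=1}^{s}(-1)^{j+1}\sigma_j\,h_{s-j}.
\]
The inner sum is exactly $h_s$ by the classical Newton-type identity $\sum_{j=0}^{n}(-1)^{j}e_j h_{n-j}=0$ (valid for every $n\ge 1$, and read off from $H(t)E(-t)=1$ for the generating series $H(t)=\sum h_n t^n=\prod_i(1-u_it)^{-1}$ and $E(t)=\sum e_n t^n=\prod_i(1+u_it)$); here $1\le s\le r-1$ so the identity uses only $e_1,\ldots,e_s$, matching the range of $j$ above. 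This yields $P_{r+s}=(-1)^{r+1}\sigma_r h_s$, and combined with $\sigma_r=u_1\cdots u_r$ and $\tau(t_i^r)=(-1)^{r+1}\sigma_r$, all three displayed equalities in the statement follow.

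There is no genuine obstacle here: the argument is a direct book-keeping exercise. The only point that requires a small amount of care is step three, where one must verify that all but the listed $P_{r+s-j}$ vanish; once the index ranges are correctly tracked, the recurrence matches the Newton identity term for term.
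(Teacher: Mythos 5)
Your proof is correct and follows essentially the same route as the paper's: both reduce $\tau(t_i^{r+s})$ via the relation \eqref{eq:t^r} to a recurrence in which the low-degree terms vanish by the initial conditions $\tau(t_i^m)=\delta_{m,0}$ for $0\le m\le r-1$, and then identify the surviving sum with $h_s$ through the Newton identity $\sum_{j=0}^{s}(-1)^{j}\sigma_j h_{s-j}=0$ by induction on $s$. The only difference is cosmetic (your index $j=r-k$ versus the paper's $k$).
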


\begin{proof}

Note that for any polynomial $f(X)$ in single variable,  the value $\tau(f(t_i))$ is independent to $i$.
Hence in the proof we will denote $t$ as a representative of $t_i$'s.
Note that $\tau(t^r)= (-1)^{r+1} (u_1\cdots u_r) $ by the defining relation.

We will proceed by induction on $s$. When $s=0$, it is trivial.

Assume that $s\ge 1$.
From  \eqref{eq:t^r}, we have
\eqn
t^{r+s} = \sum_{k=0}^{r-1} (-1)^{r-k+1} \sigma_{r-k} \, t^{k+s},
\eneqn
where $\sigma_{k}=\sigma_{k}(u_1,\ldots,u_r)  $  is the $k$-th elementary symmetric polynomial in $u_1,\ldots, u_r$.
It follows that
\eqn
\tau(t^{r+s})&& = \sum_{k=0}^{r-1} (-1)^{r-k+1} \sigma_{r-k}\,  \tau(t^{k+s})
=\sum_{k=r-s}^{r-1} (-1)^{r-k+1} \sigma_{r-k}\,  \tau(t^{k+s}) \\
&&=\sum_{k=r-s}^{r-1} (-1)^{r-k+1} \sigma_{r-k}\,  \tau(t^r) h_{k+s-r}
=\tau(t^r) \sum_{j=1}^{s} (-1)^{j+1} \sigma_{j}\,  h_{s-j} = \tau(t^r) h_s
\eneqn
where the third equality comes from the induction hypothesis, and the last equality follows from the  well-known Newton's identity below.
\eq
\sum_{j=0}^{s} (-1)^{j} \sigma_{j}\,  h_{s-j}  = \begin{cases} 
1 & \text{if} \ s=0 \\
0 & \text{if} \  s>0.
\end{cases}
\eneq
\end{proof}

\begin{lemma} \label{lem: tdtccheck} 
For $0 \le c, d\le r-1$ and $1\le i \le n$, we have
\eqn
&&\tau\left(t_i^c  \left(\sum_{j=0}^{r-d-1}  (-1)^{j} \sigma_j t_i^{r-d-j}\right)\right) =
 \delta_{c,d} (-1)^{r+1}\sigma_r.
\eneqn
\end{lemma}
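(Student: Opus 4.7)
The proof is a direct computation using Lemma \ref{lem:tau_on_ti} and Newton's identity. By $R$-linearity,
\[
\tau\Bigl(t_i^c\sum_{j=0}^{r-d-1}(-1)^j\sigma_j\, t_i^{r-d-j}\Bigr)
= \sum_{j=0}^{r-d-1}(-1)^j\sigma_j\,\tau(t_i^{c+r-d-j}),
\]
so the exponents on the right-hand side range over the integer interval $[c+1,\,c+r-d]\subseteq[1,\,2r-1]$. The strategy is to exploit three elementary facts about $\tau(t_i^k)$: $\tau(1)=1$; $\tau(t_i^k)=0$ for $1\le k\le r-1$, which is immediate from the definition of $\tau$; and $\tau(t_i^{r+s})=(-1)^{r+1}\sigma_r h_s$ for $0\le s\le r-1$, by Lemma \ref{lem:tau_on_ti}.

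I would then split into three cases according to the sign of $c-d$. If $c<d$, then $c+r-d\le r-1$, so every exponent lies in $[1,r-1]$ and every term vanishes, matching $\delta(c=d)=0$. If $c=d$, the exponent $c+r-d-j$ equals $r$ only when $j=0$ and lies in $[c+1,r-1]$ otherwise, so only the $j=0$ term survives and contributes $\sigma_0\tau(t_i^r)=(-1)^{r+1}\sigma_r$. If $c>d$, the exponent is $\ge r$ precisely for $j=0,\ldots,c-d$ and lies in $[c+1,r-1]$ for the remaining $j$; writing $c+r-d-j=r+(c-d-j)$ and applying Lemma \ref{lem:tau_on_ti}, the surviving part of the sum becomes
\[
(-1)^{r+1}\sigma_r\sum_{j=0}^{c-d}(-1)^j\sigma_j\,h_{c-d-j},
\]
which vanishes by Newton's identity since $c-d>0$.

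The only genuine bookkeeping issue is to check that the range $0\le j\le c-d$ arising in Case 3 is contained in the original summation range $[0,r-d-1]$, i.e.\ that $c-d\le r-d-1$; this follows immediately from $c\le r-1$. Apart from this boundary check, the argument is a routine application of Lemma \ref{lem:tau_on_ti} together with the same Newton identity that was already invoked in the proof of that lemma, so no new ingredients are required.
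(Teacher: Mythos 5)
Your proof is correct and follows essentially the same route as the paper's: expand by linearity, discard the terms whose exponent falls in $[1,r-1]$, apply Lemma \ref{lem:tau_on_ti} to the rest, and finish with Newton's identity. The paper merely compresses your three cases into one line by truncating the sum to $\sum_{j=0}^{c-d}$ (empty when $c<d$), so the extra case analysis and boundary check you carry out are the same computation made explicit.
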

\begin{proof}
We have
\eqn
&&\tau\left(t_i^c  \left(\sum_{j=0}^{r-d-1} (-1)^j \sigma_j t_i^{r-d-j}\right) 
\right)
= \tau\left(\sum_{j=0}^{r-d-1} (-1)^j \sigma_j t_i^{r+c-d-j} \right) \\
&&= \sum_{j=0}^{r-d-1} (-1)^j \sigma_j \tau(t_i^{r+c-d-j}) 
= \sum_{j=0}^{c-d} (-1)^j \sigma_j \tau(t_i^{r+c-d-j}) \\
&& = (-1)^{r+1}\sigma_r \sum_{j=0}^{c-d} (-1)^j \sigma_j h_{c-d-j} 
= (-1)^{r+1}\sigma_r  \delta_{c,d},
\eneqn
as desired.
\end{proof}

Assume that $\sigma_r=u_1u_2\cdots u_r \in R^\times$.  For each $1\le i\le n$ and $1\le c\le r-1$, set
\eqn
&& (t_i^c)^\vee := \dfrac{(-1)^{r+1}}{\sigma_r}\sum_{0 \le j \le r-c-1} (-1)^j \sigma_j t_i^{r-c-j}.
\eneqn
Then for each $w \in \sym(n)$ and $(c_1,\ldots,c_n)\in [0,r-1]^n]$, we set 
\eq
&&\left(t_1^{c_1}\cdots t_n^{c_n} g_w\right)^\vee:= g_{w\inv}  \prod_{i : c_i \neq 0}  (t_i^{c_i})^\vee.
\eneq

\begin{prop} \label{prop:dual basis} 
Assume that $u_1,\ldots, u_r \in R^\times$.  
For $(c_1, \ldots,c_n) \in [0,r-1]^n$ and $w\in \sym(n)$, we have 
\eqn
\tau\left( t_1^{c_1}\cdots t_n^{c_n} g_w (t_1^{d_1}\cdots t_n^{d_n} g_{u})^\vee \right) =
\begin{cases} 1 & \text{if $w=u$ and $c_i=d_i$ for all $i$,} \\
0 & \text{otherwise.} \end{cases}
\eneqn
\end{prop}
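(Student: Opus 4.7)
The plan is to reduce the computation to two independent pieces: a Hecke–type piece involving $g_w g_{u^{-1}}$, and a polynomial piece in $R[t_1,\ldots,t_n]$, and then to factor the latter coordinate by coordinate using \eqref{eq:product}.

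First I would unfold the definition so that the expression to be evaluated reads
\[
\tau\Bigl(t_1^{c_1}\cdots t_n^{c_n}\, g_w\, g_{u^{-1}}\!\prod_{i:\,d_i\neq 0}(t_i^{d_i})^\vee\Bigr).
\]
Since both $t_1^{c_1}\cdots t_n^{c_n}$ and $\prod_{i}(t_i^{d_i})^\vee$ lie in the commutative subalgebra $R[t_1,\ldots,t_n]$, I can use the trace property of $\tau$ (already verified in Corollary~4.2) to cycle the rightmost factor to the left and merge it with $t_1^{c_1}\cdots t_n^{c_n}$, obtaining
\[
\tau\bigl(A\cdot g_w g_{u^{-1}}\bigr),\qquad A:=t_1^{c_1}\cdots t_n^{c_n}\!\prod_{i:\,d_i\neq 0}(t_i^{d_i})^\vee \in R[t_1,\ldots,t_n].
\]

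Next I would apply the earlier lemma on $\tau(Ag_wg_{w'})$, which gives $\tau(A\, g_w g_{u^{-1}})=\delta(w=u)\,\tau(A)$, since $w^{-1}=u^{-1}$ iff $w=u$. This immediately accounts for the Kronecker factor $\delta(w=u)$ in the statement.

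It remains to show $\tau(A)=\prod_{i=1}^n \delta(c_i=d_i)$. Here I would invoke \eqref{eq:product} to split $\tau(A)=\prod_{i=1}^n \tau(f_i(t_i))$, where $f_i(X)=X^{c_i}(X^{d_i})^\vee$ if $d_i\neq 0$ and $f_i(X)=X^{c_i}$ if $d_i=0$. For the indices with $d_i=0$ the factor is $\tau(t_i^{c_i})=\delta(c_i=0)=\delta(c_i=d_i)$ by the very definition of $\tau$. For the indices with $d_i\geq 1$, substituting
\[
(t_i^{d_i})^\vee=\frac{(-1)^{r+1}}{\sigma_r}\sum_{j=0}^{r-d_i-1}(-1)^j\sigma_j t_i^{r-d_i-j}
\]
and applying Lemma~\ref{lem: tdtccheck} yields $\tau(t_i^{c_i}(t_i^{d_i})^\vee)=\tfrac{(-1)^{r+1}}{\sigma_r}\cdot (-1)^{r+1}\sigma_r\,\delta(c_i=d_i)=\delta(c_i=d_i)$; the hypothesis $\sigma_r\in R^\times$ ensures this makes sense. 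Taking the product over $i$ produces $\prod_i \delta(c_i=d_i)$, finishing the proof.

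The argument is essentially formal once the right framing is chosen; the only subtlety I would watch carefully is the bookkeeping for the indices $i$ with $d_i=0$, making sure that the product $\prod_{i:d_i\neq 0}(t_i^{d_i})^\vee$ interacts correctly with $\tau$ so that the omitted factors really do contribute $\delta(c_i=0)$. Everything else is direct bookkeeping on top of Lemmas~\ref{lem:tau_on_ti} and~\ref{lem: tdtccheck} and Corollary~4.2.
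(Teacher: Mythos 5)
Your proposal is correct and follows essentially the same route as the paper's own proof: unfold $(t_1^{d_1}\cdots t_n^{d_n}g_u)^\vee$, use the trace property together with the lemma on $\tau(Ag_wg_{w'})$ to extract $\delta(w=u)$, and then factor the remaining polynomial trace via \eqref{eq:product} and Lemma \ref{lem: tdtccheck}. Your explicit handling of the indices with $d_i=0$ is a point the paper leaves implicit, but it is the same argument.
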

\begin{proof}
We have
\eqn
&&\tau\left(t_1^{c_1} \cdots t_n^{c_n} g_{w}  \, (t_1^{d_1}\cdots t_n^{d_n} g_{u})^\vee \right) =
\tau\left(t_1^{c_1} \cdots t_n^{c_n} g_{w}  \, g_{u\inv} (t_1^{d_1})^\vee \cdots (t_n^{d_n})^\vee \right) 
\\
&&=\delta_{w,u} \tau\left(t_1^{c_1} \cdots t_n^{c_n} (t_1^{d_1})^\vee \cdots (t_n^{d_n})^\vee \right)  
= \delta(\text{$w=u$ and $c_i=d_i$ for all $i$}).
\eneqn
where the last equality follows from  \eqref{eq:product} and Lemma \ref{lem: tdtccheck}.
\end{proof}

\begin{corollary}
The trace form $\tau$ is symmetrizing if $\sigma_r:=u_1 \cdots u_r$ is invertible in $R$.
\end{corollary}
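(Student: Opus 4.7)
The plan is to deduce the corollary directly from Proposition \ref{prop:dual basis}. Non-degeneracy of the bilinear form $(x,y) \mapsto \tau(xy)$ on the free $R$-module $\Hnr$ of rank $r^n n!$ is equivalent to the invertibility of its Gram matrix
\[
G = \bigl(\tau(b b')\bigr)_{b,b' \in \B}
\]
with respect to the basis $\B = \set{t_1^{c_1}\cdots t_n^{c_n} g_w}{0 \le c_i \le r-1,\ w \in \sym(n)}$ from Theorem \ref{basisShojiHecke}.

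First I would observe that the hypothesis $\sigma_r \in R^\times$ is exactly what guarantees that the elements $(t_i^c)^\vee$—and therefore each $(t_1^{c_1}\cdots t_n^{c_n} g_w)^\vee$—are well-defined elements of $\Hnr$, since their definition involves the factor $(-1)^{r+1}/\sigma_r$. Expanding every such dual basis element in the basis $\B$ produces a square matrix $P$ over $R$, indexed by the same pairs $((c_1,\ldots,c_n), w) \in [0,r-1]^n \times \sym(n)$ as $G$.

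By Proposition \ref{prop:dual basis}, we have $\tau\bigl(b \cdot (b')^\vee\bigr) = \delta_{b,b'}$ for all $b, b' \in \B$. Expanding $(b')^\vee = \sum_c P_{c, b'}\, c$ and using bilinearity gives $\tau\bigl(b \cdot (b')^\vee\bigr) = \sum_c P_{c,b'}\tau(bc) = (GP)_{b, b'}$, so $GP = I$. Taking determinants yields $\det(G)\det(P) = 1$, hence $\det(G) \in R^\times$ and $G$ is invertible. Since the Gram matrix is invertible, the induced map $\Hnr \to \Hom_R(\Hnr, R)$, $x \mapsto \tau(x\,\cdot\,)$, is an isomorphism, i.e., $\tau$ is symmetrizing.

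I do not anticipate any real obstacle here: the heavy lifting was carried out in Proposition \ref{prop:dual basis}, and only a short linear-algebra argument remains. The one subtle point is verifying that the indexing of the dual family matches the indexing of $\B$—which is immediate from the very definition of $(\,\cdot\,)^\vee$—and confirming that $\sigma_r \in R^\times$ is used nowhere except to clear the denominator in the formula for $(t_i^c)^\vee$, so that the construction stays inside $\Hnr$.
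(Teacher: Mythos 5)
Your argument is correct and is exactly the route the paper intends: the corollary is stated immediately after Proposition \ref{prop:dual basis} precisely because the existence of the dual family $\{(t_1^{c_1}\cdots t_n^{c_n}g_w)^\vee\}$ (which requires $\sigma_r\in R^\times$ only to make sense of the factor $\sigma_r^{-1}$ in $(t_i^c)^\vee$) forces the Gram matrix of $\tau$ with respect to the basis of Theorem \ref{basisShojiHecke} to be invertible. Your $GP=I$ determinant argument is a clean way to spell out this implicit step; nothing is missing.
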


\section{Symmetric group action on $\Hnr$ and the fixed subalgebra $\Hnr^{\sym(r)}$}
Recall that there is an action of the symmetric group $\sym(r)$ on the subalgebra $R[t_1,\ldots,t_n]$ given by
$$^\sigma b_{k_1,\ldots,k_n} : = b_{\sigma(k_1),\ldots,\sigma(k_n)} \quad \text{for $\sigma$ in $\sym(r)$}. $$
For each $\sigma \in \sym(r)$
we obtain an $R$-linear endomorphism on $\Hnr$, denoted by $\sigma$ again, by setting
$$^\sigma g_w := g_w \quad \text{for $w$ in $\sym(n)$}. $$

The following is a direct consequence of Theorem \ref{thm:presentation2}.
\begin{prop}[cf. [Proposition 2.2 in \cite{JLPdA17}]
The map  $\sigma$ on $\Hnr$ is an $R$-algebra automorphism so that the group $\sym(r)$ acts on $\Hnr$ by $R$-algebra automorphisms.
\end{prop}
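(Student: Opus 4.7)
The plan is to invoke Theorem \ref{thm:presentation2} and verify that the prescribed assignment on generators respects the six defining relations listed there. Concretely, define $\sigma \colon \Hnr \to \Hnr$ on generators by $g_i \mapsto g_i$ and $b_{k_1,\ldots,k_n} \mapsto b_{\sigma(k_1),\ldots,\sigma(k_n)}$. Since Theorem \ref{thm:presentation2} says $\Hnr$ is the free $R$-algebra on these generators modulo the listed relations, establishing that each relation is preserved will extend $\sigma$ uniquely to an $R$-algebra endomorphism.

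The verifications are routine re-indexings. Relations (1) and (2) involve only the $g_i$, which are fixed. For relation (3), since $\sigma$ permutes $[1,r]^n$ bijectively and the condition $k_i = k_{i+1}$ is invariant under applying $\sigma$ coordinatewise, the substitution $\ell_j := \sigma(k_j)$ gives $^\sigma e_i = \sum_{\ell_i = \ell_{i+1}} b_{\ell_1,\ldots,\ell_n} = e_i$, so the quadratic relation $g_i^2 = 1 + (q-q^{-1})e_i g_i$ is mapped to itself. Relation (4) is preserved because $\sigma$ acts as a bijection on indices, turning $\delta_{(k_1,\ldots,k_n),(k'_1,\ldots,k'_n)}$ into $\delta_{(\sigma(k_1),\ldots),(\sigma(k'_1),\ldots)}$, which agrees since $\sigma$ is injective. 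For relation (5), the key observation is that the $\sym(n)$-action (permuting positions via $s_i$) commutes with the $\sym(r)$-action (permuting values via $\sigma$); thus applying $\sigma$ to $g_i b_{k_1,\ldots,k_n} = b_{s_i.(k_1,\ldots,k_n)} g_i$ yields an instance of the same relation with $(k_1,\ldots,k_n)$ replaced by $(\sigma(k_1),\ldots,\sigma(k_n))$. Relation (6) is preserved because $\sigma$ merely permutes the summands.

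Hence the assignment extends to an $R$-algebra endomorphism $\sigma \colon \Hnr \to \Hnr$. Applying the same argument to $\sigma^{-1}$ produces an endomorphism that is inverse to $\sigma$ on generators, hence on all of $\Hnr$, so $\sigma$ is an $R$-algebra automorphism. Finally, the assignment $\sigma \mapsto \sigma$ is a group homomorphism $\sym(r) \to \Aut_{R\text{-alg}}(\Hnr)$: both $\sigma \tau$ and $\sigma \circ \tau$ send $g_i \mapsto g_i$ and $b_{k_1,\ldots,k_n} \mapsto b_{(\sigma\tau)(k_1),\ldots,(\sigma\tau)(k_n)}$, and endomorphisms are determined by their values on generators.

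There is no real obstacle; the entire content of the proposition is the observation that the presentation in Theorem \ref{thm:presentation2} has been chosen so that the two commuting actions of $\sym(n)$ (on positions) and $\sym(r)$ (on values) on $[1,r]^n$ translate transparently into symmetries of the relations. The most delicate point is simply to note for relation (3) that the defining formula for $e_i$ is itself $\sym(r)$-invariant, which is what makes the quadratic relation $\sym(r)$-equivariant.
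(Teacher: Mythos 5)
Your proposal is correct and follows the same route as the paper, which simply records the proposition as a direct consequence of Theorem \ref{thm:presentation2}; you have merely written out the routine verification that each of the six relations is preserved. The key observations you isolate — that $e_i$ is $\sym(r)$-invariant and that the position and value actions on $[1,r]^n$ commute — are exactly what makes the paper's one-line justification work.
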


For each $1\le i,j \le n$, define 
\eqn
e_{i,j}:=\sum_{(k_1,\ldots,k_n)\in [1,r]^n, \, k_i=k_j } b_{k_1,\ldots,k_n}. 
\eneqn
Then $e_{i,i+1}=e_i$ for $1\le i \le n-1$ and $^\sigma e_{i,j} = e_{i,j}$ for all $\sigma \in \sym(r)$ and $1\le i,j\le n$.

\begin{lemma} \label{lem:eij}
For $1\le i<j \le n-1$, we have
\eqn
g_{j-1} \cdots g_{i+1} e_i g_{i+1}\inv \cdots g\inv_{j-1} 
= e_{i,j}.
\eneqn
\end{lemma}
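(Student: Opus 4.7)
The plan is to reduce the identity to the statement that conjugation by $g_k$ on the commutative subalgebra $R[t_1,\ldots,t_n]$ realizes the transposition $s_k \in \sym(n)$, and then compute the $\sym(n)$-action on $e_i$ directly.

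First I would observe that each $g_k$ is invertible in $\Hnr$: from Proposition \ref{prop:quadratic} we have $g_k^2 = 1 + (q-q^{-1})e_k g_k$, hence $g_k^{-1} = g_k - (q-q^{-1})e_k$. Next, applying Proposition \ref{actiongi} to any $f \in R[t_1,\ldots,t_n]$ yields $g_k f = (s_k.f)\, g_k$, and after right-multiplication by $g_k^{-1}$,
\eqn
g_k f g_k^{-1} = s_k.f \qquad (f \in R[t_1,\ldots,t_n], \ 1\le k\le n-1).
\eneqn
Iterating this, for the word $g_{j-1}\cdots g_{i+1}$ we obtain
\eqn
g_{j-1}\cdots g_{i+1}\, e_i\, g_{i+1}^{-1}\cdots g_{j-1}^{-1} = (s_{j-1}\cdots s_{i+1}).e_i,
\eneqn
since $e_i$ lies in $R[t_1,\ldots,t_n]$.

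Then I would compute the permutation $w:=s_{j-1}\cdots s_{i+1}$ explicitly. Reading the product right-to-left, one sees that $w$ fixes every index outside the interval $[i+1,j]$, sends $i+1 \mapsto j$, and sends $a \mapsto a-1$ for $i+2 \le a \le j$; in particular $w(i)=i$ and $w(i+1)=j$. Using the formula $w.b_{(k_1,\ldots,k_n)} = b_{(k_{w^{-1}(1)},\ldots,k_{w^{-1}(n)})}$ from Section 2.2, and setting $l_a:=k_{w^{-1}(a)}$ so that $k_a=l_{w(a)}$, the defining condition $k_i=k_{i+1}$ of the summands of $e_i$ translates to $l_{w(i)}=l_{w(i+1)}$, i.e.\ $l_i = l_j$. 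Hence
\eqn
(s_{j-1}\cdots s_{i+1}).e_i = \sum_{k_i = k_{i+1}} b_{w.(k_1,\ldots,k_n)} = \sum_{l_i=l_j} b_{l_1,\ldots,l_n} = e_{i,j},
\eneqn
which gives the claim.

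There is no serious obstacle here; the whole argument rests on Proposition \ref{actiongi} together with invertibility of $g_k$. The only point requiring care is the bookkeeping of $w^{-1}$ versus $w$ in the action on multi-indices, which I would state once at the outset to avoid confusion with the place-permutation action.
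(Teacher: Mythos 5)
Your proof is correct and follows essentially the same route as the paper: both arguments rest on the intertwining relation $g_k f = (s_k.f)\,g_k$ from Proposition \ref{actiongi}, the paper conjugating by one $g_k$ at a time and inducting on $j-i$, while you compute the composite permutation $s_{j-1}\cdots s_{i+1}$ in one step. Your explicit bookkeeping of $w$ versus $w^{-1}$ and the verification that $g_k^{-1}=g_k-(q-q^{-1})e_k$ are both accurate.
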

\begin{proof}
We have
$$g_{i+1} e_{i} =  \sum_{k_i=k_{i+1}} g_{i+1}b_{k_1,\ldots,k_n}  = \sum_{k_i=k_{i+2}}b_{k_1,\ldots,k_n} g_{i+1} = e_{i,i+1} g_{i+1}.$$
By induction on $j-i$, we obtain the assertion.
\end{proof}

Let $[k_1,\ldots,k_n]$ denote the orbit of $(k_1,\ldots,k_n)$ under the action of $\sym(r)$ and define
\eqn
b_{[k_1,\ldots,k_n]} : = \sum_{(\ell_1,\ldots,\ell_n) \in {[k_1,\ldots,k_n]}}  b_{\ell_1,\ldots,\ell_n}
=\sum_{\stackrel{(\ell_1,\ldots,\ell_n) \in[1,r]^n }{\text{such that}  \, \ell_i = \ell_j \, \text{if and only if} \, k_i= k_j} } b_{\ell_1,\ldots,\ell_n}.
\eneqn

\begin{lemma} [cf.  Lemma 4.2 in \cite{JLPdA17}] \label{lem:idem_orbit}
For each $(k_1,\ldots,k_n)\in [1,r]^n$, 
we have
\eqn
b_{[k_1,\ldots,k_n]} = \left(\prod_{ 1\le i<j<\le n,  k_i=k_j} e_{i,j} \right) \left( \prod_{ 1\le i<j<\le n,  k_i\neq k_j} (1-e_{i,j})\right).
\eneqn
\end{lemma}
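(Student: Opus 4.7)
The plan is to exploit the fact that $R[t_1,\ldots,t_n]$ is a commutative algebra in which $\{b_{\ell_1,\ldots,\ell_n}\}_{(\ell_1,\ldots,\ell_n)\in[1,r]^n}$ is a complete system of orthogonal idempotents (Lemma \ref{orthonormality}). In such a setting, any idempotent that is expressible as a sum of certain $b_{\ell_1,\ldots,\ell_n}$'s behaves like the characteristic function of the corresponding subset of $[1,r]^n$, and products of such idempotents correspond to intersections of subsets.

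First I would rewrite the two factors in question as sums of basis idempotents. By definition, $e_{i,j}=\sum_{\ell_i=\ell_j} b_{\ell_1,\ldots,\ell_n}$, and since $\sum_{(\ell_1,\ldots,\ell_n)\in [1,r]^n}b_{\ell_1,\ldots,\ell_n}=1$ (the final identity in the proof of Lemma \ref{orthonormality}), we also have $1-e_{i,j}=\sum_{\ell_i\neq \ell_j} b_{\ell_1,\ldots,\ell_n}$. Thus each factor in the product on the right-hand side is a sum of basis idempotents, indexed by a subset $S_{i,j}\subset [1,r]^n$ (namely $\{\ell_i=\ell_j\}$ or $\{\ell_i\neq \ell_j\}$ according as $k_i=k_j$ or $k_i\neq k_j$).

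Next, using orthogonality ($b_{\ell}b_{\ell'}=\delta_{\ell,\ell'}b_{\ell}$) together with commutativity, the product
\[
\Bigl(\prod_{i<j,\, k_i=k_j}e_{i,j}\Bigr)\Bigl(\prod_{i<j,\, k_i\neq k_j}(1-e_{i,j})\Bigr)
\]
expands to $\sum b_{\ell_1,\ldots,\ell_n}$ over those $(\ell_1,\ldots,\ell_n)$ lying in the intersection $\bigcap_{i<j}S_{i,j}$, i.e.\ satisfying simultaneously $\ell_i=\ell_j$ whenever $k_i=k_j$ and $\ell_i\neq \ell_j$ whenever $k_i\neq k_j$. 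This is precisely the condition ``$\ell_i=\ell_j\iff k_i=k_j$'' which, by the characterization of $\sym(r)$-orbits given in \eqref{eq:orbits} (Remark \ref{rem:OP}), cuts out exactly the orbit $[k_1,\ldots,k_n]$. Comparing with the defining expression of $b_{[k_1,\ldots,k_n]}$ completes the proof.

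There is no serious obstacle here: once the factors are identified as sums over the sets $\{\ell_i=\ell_j\}$ and $\{\ell_i\neq \ell_j\}$, the result is a direct application of orthogonality plus the orbit description \eqref{eq:orbits}. The only point requiring any care is to invoke that description correctly, ensuring that the pairwise conditions over all $1\le i<j\le n$ genuinely determine the partition class of $(k_1,\ldots,k_n)$ and hence its $\sym(r)$-orbit.
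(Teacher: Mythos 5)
Your argument is correct and matches the paper's proof in substance: the paper likewise expands $\prod_{k_i\neq k_j}(1-e_{i,j})$ and $\prod_{k_i=k_j}e_{i,j}$ as sums of the orthogonal idempotents $b_{\ell_1,\ldots,\ell_n}$ over the sets $\{\ell_i\neq\ell_j\}$ and $\{\ell_i=\ell_j\}$, multiplies them using orthogonality to get the sum over the intersection, and identifies that index set with the orbit via \eqref{eq:orbits}. No gaps.
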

\begin{proof}
We have
\eqn
\prod_{k_i\neq k_j} (1-e_{i,j}) 
&&=\sum_{(\ell_1,\ldots,\ell_n) \in[1,r]^n} \left(\prod_{k_i\neq k_j} (1-e_{i,j})  \right) b_{\ell_1,\ldots,\ell_n} \\
&&=\sum_{(\ell_1,\ldots,\ell_n) \in[1,r]^n} \left( \prod_{k_i\neq k_j} (1-e_{i,j}) b_{\ell_1,\ldots,\ell_n}   \right) \\
&&=\sum_{(\ell_1,\ldots,\ell_n) \in[1,r]^n} \left( \prod_{k_i\neq k_j} (1- \delta_{\ell_i,\ell_j}) b_{\ell_1,\ldots,\ell_n}   \right) \\
&&=\sum_{\stackrel{(\ell_1,\ldots,\ell_n) \in[1,r]^n }{\text{such that}  \, \ell_i\neq \ell_j \, \text{if} \, k_i\neq k_j} } b_{\ell_1,\ldots,\ell_n}. 
\eneqn

Similarly we obtain 
\eqn 
\prod_{k_i= k_j} e_{i,j} =\sum_{\stackrel{(\ell_1,\ldots,\ell_n) \in[1,r]^n }{\text{such that}  \, \ell_i = \ell_j \, \text{if} \, k_i= k_j} } b_{\ell_1,\ldots,\ell_n},
\eneqn
and hence we have
\eqn
\left(\prod_{k_i=k_j} e_{i,j} \right) \left( \prod_{k_i\neq k_j} (1-e_{i,j})\right)
=\sum_{\stackrel{(\ell_1,\ldots,\ell_n) \in[1,r]^n }{\text{such that}  \, \ell_i = \ell_j \, \text{if and only if} \, k_i= k_j} } b_{\ell_1,\ldots,\ell_n}.
\eneqn
Hence the assertion follows from \eqref{eq:orbits}.
\end{proof}

Let $\Hnr^{\sym(r)}$ be the fixed subalgebra of $\Hnr$ under the action of $\sym(r)$.

\begin{prop}
\hfill
\begin{enumerate}
\item  The set
\eqn
\set{b_{[k_1,\ldots,k_n]}g_w}{[k_1,\ldots,k_n] \in \sym(r) \backslash [1,r]^n  , \ w\in \sym(n)}
\eneqn
forms an $R$-basis of the subalgebra $\Hnr^{\sym(r)}$. 

\item The subalgebra of $\Hnr$ generated by $g_1,\ldots,g_{n-1}, e_1,\ldots, e_{n-1}$ is equal to the subalgebra $\Hnr^{\sym(r)}$.
\end{enumerate}
\end{prop}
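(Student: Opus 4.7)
The plan is to derive (1) directly from the observation that $\sym(r)$ acts on the basis $\B$ of \eqref{eq:bkbasis} by permuting only the $b$-factors while fixing the $g_w$-factors, and then to deduce (2) from (1) together with Lemmas \ref{lem:eij} and \ref{lem:idem_orbit}, plus the quadratic relation to invert the $g_i$.

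For part (1), I would expand an arbitrary element $h \in \Hnr$ in the basis $\B$ as $h = \sum_{(k_1,\ldots,k_n),w} c_{k_1,\ldots,k_n,w}\, b_{k_1,\ldots,k_n} g_w$. Because the $\sym(r)$-action satisfies ${}^\sigma g_w = g_w$ and ${}^\sigma b_{k_1,\ldots,k_n} = b_{\sigma(k_1),\ldots,\sigma(k_n)}$, the action permutes the set $\B$ as a whole, so the requirement ${}^\sigma h = h$ for every $\sigma \in \sym(r)$ translates verbatim into $c_{k_1,\ldots,k_n,w} = c_{\sigma(k_1),\ldots,\sigma(k_n),w}$ for all $\sigma$ and $w$. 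In other words, the coefficient function is constant on $\sym(r)$-orbits for each fixed $w$, which is exactly the statement that $h$ is an $R$-linear combination of the orbit sums $b_{[k_1,\ldots,k_n]} g_w$. Linear independence of these orbit sums follows immediately from the linear independence of $\B$: distinct orbit sums are supported on disjoint subsets of $\B$.

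For part (2), let $\mathcal{A}$ denote the subalgebra generated by $g_1,\ldots,g_{n-1}, e_1,\ldots,e_{n-1}$. The inclusion $\mathcal{A} \subseteq \Hnr^{\sym(r)}$ is immediate: each $e_i$ is the sum of $b_{k_1,\ldots,k_n}$ over the $\sym(r)$-stable subset $\{k_i = k_{i+1}\}$ of $[1,r]^n$, and each $g_j$ is fixed by construction of the action. For the reverse inclusion it suffices, by part (1), to show that every $b_{[k_1,\ldots,k_n]}\, g_w$ lies in $\mathcal{A}$. From Proposition \ref{prop:quadratic} we get $g_i^{-1} = g_i - (q-q^{-1})\, e_i \in \mathcal{A}$, placing both $g_i$ and $g_i^{-1}$ in $\mathcal{A}$; then Lemma \ref{lem:eij} puts each $e_{i,j}$ in $\mathcal{A}$, and Lemma \ref{lem:idem_orbit} expresses $b_{[k_1,\ldots,k_n]}$ as a polynomial in the $e_{i,j}$'s. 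Multiplying by $g_w \in \mathcal{A}$ finishes the containment, and the proposition follows.

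I do not anticipate a genuine obstacle: both parts amount to bookkeeping once the identity $g_i^{-1} = g_i - (q-q^{-1}) e_i$ and the two key lemmas are in hand. The point requiring the most care is the logical bridge between the two parts, namely the passage from ``the coefficient function is $\sym(r)$-invariant for every fixed $w$'' to ``$h$ lies in the span of the $b_{[k_1,\ldots,k_n]} g_w$''. This is routine because the $\sym(r)$-action permutes $\B$ as a set, but it is the lynchpin making the reduction in (2) to the orbit-sum basis legitimate.
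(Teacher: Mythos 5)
Your proposal is correct and follows essentially the same route as the paper: part (1) is the orbit-sum argument on the basis $\B$ (which the paper dismisses as ``immediate from the definitions''), and part (2) is deduced from (1) via Lemma \ref{lem:eij} and Lemma \ref{lem:idem_orbit}, exactly as in the paper. Your explicit observation that $g_i^{-1}=g_i-(q-q^{-1})e_i$ lies in the generated subalgebra is a useful detail the paper leaves implicit when invoking Lemma \ref{lem:eij}.
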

\begin{proof}
The first assertion is immediate from the definitions.

By definition the elements $g_1,\ldots,g_{n-1}, e_1,\ldots, e_{n-1}$ belong to $\Hnr^{\sym(r)}$.
The second assertion follows from (1) together with Lemma \ref{lem:eij} and  Lemma  \ref{lem:idem_orbit}.
\end{proof}

\medskip
{\bf Declarations :}\

\smallskip
 
\underline{Ethical Approval} :  
This declaration is not applicable to the content of our submission.

\underline{Funding} : The research of M.\ Kim, the first of this paper, was supported by the National Research Foundation of Korea (NRF) Grant funded by the Korea government(MSIP)
(NRF-2022R1F1A1076214 and NRF-2020R1A5A1016126.

\underline{Availability of data and materials} :
This declaration is not applicable to the content of our submission.



\end{document}